\numberwithin{equation}{section}
\newcommand{\be}{\begin{eqnarray}}
\newcommand{\ee}{\end{eqnarray}}
\newcommand{\ce}{\begin{eqnarray*}}
\newcommand{\de}{\end{eqnarray*}}
\newtheorem{theorem}{Theorem}[section]
\newtheorem{lemma}[theorem]{Lemma}
\newtheorem{proposition}[theorem]{Proposition}
\newtheorem{corollary}[theorem]{Corollary}
\theoremstyle{remark}
\newtheorem{assumption}[theorem]{Assumption}
\newtheorem{example}[theorem]{Example}
\newtheorem{remark}[theorem]{Remark}
\newtheorem{definition}[theorem]{Definition}
\crefname{eqn}{Equation}{Equations}
\crefname{assumption}{Assumption}{Assumptions}
\crefname{innercustomthm}{Condition}{Conditions}
\def\={&\!\!=\!\!&}
\def\eps{\varepsilon}
\def\<{{\langle}}
\def\>{{\rangle}}
\def\({{\Big(}}
\def\){{\Big)}}
\def\bx{{\mathbf{x}}}
\def\={&\!\!=\!\!&}
\def\bt{\begin{theorem}}
\def\et{\end{theorem}}
\def\bl{\begin{lemma}}
\def\el{\end{lemma}}
\def\br{\begin{remark}}
\def\er{\end{remark}}
\def\bd{\begin{definition}}
\def\ed{\end{definition}}
\def\bp{\begin{proposition}}
\def\ep{\end{proposition}}
\def\bc{\begin{corollary}}
\def\ec{\end{corollary}}
\def\bx{\begin{example}}
\def\ex{\end{example}}
\def\cA{{\mathcal A}}
\def\cC{{\mathcal C}}
\def\cE{{\mathcal E}}
\def\cF{{\mathcal F}}
\def\cH{{\mathcal H}}
\def\cL{{\mathcal L}}
\def\cP{{\mathcal P}}
\def\mE{{\mathbb E}}
\def\E{\mE}
\def\mN{{\mathbb N}}
\def\mP{{\mathbb P}}
\def\mR{{\mathbb R}}
\def\mZ{{\mathbb Z}}
\def\geq{\geqslant}
\def\leq{\leqslant}
\newcommand{\R}{{\mathbb R}}
\newcommand{\norm}[1]{{\left\vert\kern-0.25ex\left\vert\kern-0.25ex\left\vert #1 
    \right\vert\kern-0.25ex\right\vert\kern-0.25ex\right\vert}}
\renewcommand{\le}{\leq}
\renewcommand{\ge}{\geq}
\begin{document}
		\title{Regularization by regular noise: a numerical result}
 \author{ Ke Song,  Chengcheng Ling \and  Haiyi Wang 
}
	\date{\today}
    \address{Beijing Institute of Technology, School of Mathematics and Statistics, China.}
\email{ske2022@126.com}
 \address{University of Augsburg, Institut f\"ur Mathematik, 
86159 Augsburg,  Germany.}
\email{chengcheng.ling@uni-a.de}

\address{Peking University, School of Mathematical Sciences, China.}
\email{wanghaiyi@stu.pku.edu.cn}

\begin{abstract}
We study a singular stochastic equation driven by regular noise of fractional Brownian type with Hurst index $H \in (1,\infty)\setminus\mathbb{Z}$ and drift coefficient $b \in \mathcal{C}^\alpha$, where $\alpha > 1 - \tfrac{1}{2H}$. The strong well-posedness of this equation was first established in \cite{GG}, a phenomenon known as {\it regularization by regular noise}. In this note, we provide a numerical analysis of the equation. Specifically, we prove that the Euler--Maruyama approximation $X^n$ converges strongly to the unique solution $X$ at rate $n^{-1}$. Moreover, we show that $n(X - X^n)$ converges in probability to a non-trivial limit as $n \to \infty$, which confirms that the rate $n^{-1}$ is optimal for this scheme. In this sense, this provides a first-order numerical method for equations with non-Lipschitz drift while still achieving the rate $n^{-1}$.

	\bigskip
		
  \noindent {{\sc Mathematics Subject Classification (2020):}
		Primary 60H35, 
  65C30, 
  60H10; 
		Secondary
		60H50, 
		60L90. 
		}

		\noindent{{\sc Keywords:} Singular SDEs; 
  Strong approximation; Euler scheme; Regularization by noise; Stochastic sewing, Fractional Brownian motion.}
	\end{abstract}
	
	\maketitle

\setcounter{tocdepth}{2}
\section{Introduction}
It is known that when $H\in(0,1)$, a $d$-dimensional fractional Brownian motion (fBM) $B^H$ can be defined via the
Mandelbrot--van Ness representation \cite{MVN}:
\begin{align}\label{def:fBM-Hsmall1}
    B_t^H:=\int_{-\infty}^0(|t-s|^{H-\frac{1}{2}}-|s|^{H-\frac{1}{2}})\mathrm{d} W_s+\int_0^t|t-s|^{H-\frac{1}{2}}\mathrm{d} W_s, \quad t\geq 0,
\end{align}
where $W$ is a two-sided $d$-dimensional standard Brownian motion on some probability
space $(\Omega,\cF,\mP)$. As discussed in \cite{GG}, the fractional integral in \eqref{def:fBM-Hsmall1} can in fact be naturally extended to the regime where the Hurst parameter $H>1$, that is, for $H\in (1,\infty)\backslash\mZ$, 
\begin{align}
   \label{def:fbmHbig1} 
   B_t^H:=\int_{0\leq s_1\leq\ldots\leq s_{\lfloor H\rfloor}\leq t}B^{H-\lfloor H\rfloor}_{s_1}\mathrm{d} {s_1}\ldots \mathrm{d}  s_{\lfloor H\rfloor}.
\end{align}
Due to the possible multiple integrals in \eqref{def:fbmHbig1}, we can see clearly that the paths of $B^H$ for $H>1$ are regular, at least $C^1$. This fact leads to 
one of the appealing results of \cite{GG}, 
which shows the strong well-posedness of the following equation with singular $b$:
\begin{align}\label{eq:SDE}
\mathrm{d} X_t=b(X_t)\mathrm{d} t+\mathrm{d} B_t^H,\quad X_0=x_0\in\mR^d,
\end{align}
where $b\in C^\alpha(\mR^d;\mR^d)$ with $\alpha\in(1-\frac{1}{2H},1)$ and $B^H$ is a $d$-dimensional fBM with $H\in(1,\infty)\backslash\mZ$. 
This result supports the principle of {\it regularization by noise: increased noise roughness leads to enhanced regularization}(\cite{gubicat, harang2020cinfinity, ling, Le, MP, MM, GaGe}). There is also an alternative way, mentioned in \cite{GG}, to interpret \eqref{eq:SDE} as a singular coupled equation perturbed by degenerate noise:
\begin{align}\label{eq:SDEcoup}
   \left\{ \begin{array}{cc}
       \mathrm{d}  X_t  & =\big(b(X_t)+V_t^{\lfloor H\rfloor}\big) \mathrm{d} t,\\
         \mathrm{d} V_t^{\lfloor H\rfloor}&=V_t^{\lfloor H\rfloor-1}\mathrm{d} t,\\
       \qquad\qquad\qquad\qquad \ldots, &\\
        \mathrm{d} V_t^1&=\mathrm{d} B_t^{H-\lfloor H\rfloor},
    \end{array}\right.
\end{align}
which shares a similar framework with \cite{RM, HZZZ, HLL, Ling24}, but with non-Markovian noise when $H\neq k+\frac{1}{2}$ for any $k\in\mN$.

Our motivation here is to provide a numerical result for this equation by
considering its Euler--Maruyama (EM) scheme
\begin{align}
    \label{eq:SDE-EM}
   \mathrm{d} X_t^n=b(X_{k_n(t)}^n)\mathrm{d} t+\mathrm{d} B_t^H,\qquad X_0^n=x_0^n\in\mR^d
\end{align}
with $k_n(t)\coloneqq\frac{\lfloor nt\rfloor}{n}$.

When $H\in (0,1)$, \cite{BDG} has shown the strong convergence rate $n^{-(\frac{1}{2}+\alpha H)\wedge 1+\epsilon}$ of the scheme \eqref{eq:SDE-EM} to \eqref{eq:SDE}, in which the Girsanov Theorem and the Stochastic Sewing Lemma (SSL) \cite{Le} play crucial roles. More precisely, the methodology
in \cite{BDG} can be roughly summarized as follows (taking $x_0=x_0^n$):
\begin{align*}
    \|\sup_{t\in[0,1]}|X_t-X^n_t|\|_{L^p_\omega}\overset{\text{Girsanov}}{\lesssim}\big\|\int_0^1b(B_{s}^H)-b(B_{k_n(s)}^H)\mathrm{d} r\big\|_{L^p_\omega}\overset{\text{SSL}}{\lesssim} n^{-(\frac{1}{2}+\alpha H)\wedge 1+\epsilon}.
\end{align*}

When $H>1$, on the one hand, as indicated in \cite{GG, GaGe}, the Girsanov Theorem becomes less helpful; meanwhile, PDE tools clearly do not apply. Therefore, the challenge in showing the convergence of \eqref{eq:SDE-EM} to \eqref{eq:SDE} lies in two aspects compared with known results: the absence of Girsanov’s theorem and the non-Markovian nature of the noise. Alternatively, although \cite{GG} studies only well-posedness, it already hints at a possible way to avoid using Girsanov’s theorem; concerning numerical approximation, \cite{BDG-Levy} provides similar evidence, but it considers singular SDEs driven by an $\alpha$-stable process, which is Markovian. Nevertheless, we are able to show a convergence rate of $n^{-1}$, which is comparable with \cite{BDG}, since $H>1$ here implies $\alpha>1-\frac{1}{2H}>\frac{1}{2}$.

Given this convergence rate, it is natural to ask how far it is from being optimal. Such questions on the optimality of the EM scheme have been addressed, for instance, in \cite{TY, EGY, DGL-CLT} for Brownian noise, \cite{PSS} for L\'evy processes with jumps, and \cite{AN, NN, JYS, HLN} for fBM with $H<1$. In particular, \cite{AN} confirms that for $H\in(\frac{1}{2},1)$, the order $n^H$ is optimal for the EM scheme \eqref{eq:SDE-EM} when $b\in \cC^2$ (twice differentiable) with possible linear growth. Here, we complete this result with rate $n^{-1}$ for $H>1$ and less regular $b$ (in fact, only $\cC^1$). The idea of the proof is straightforward: we show that for $b\in\cC^\alpha$, the following approximation converges in probability to a possibly non-zero limit:
\begin{align*}
    n(X_t-X^n_t)\overset{n\rightarrow\infty}{\rightarrow} c(t)\neq0,
\end{align*}
which indicates that for \eqref{eq:SDE-EM}, the best convergence rate one can expect is no faster than order $n^{-1}$. Consequently, this also verifies that the rate we obtain is indeed optimal.
Evidently, \eqref{eq:SDE-EM} provides a stochastic numerical method for simulating equations with singular $b$ while still converging with rate $n^{-1}$. 

Lastly, we mention a few related works that share a similar interest and spirit in the study of numerical approximations. For equations of the type \eqref{eq:SDE} with singular drift, convergence results have been established in \cite{NS, DG, JM, BW} for additive Brownian motion, in \cite{DGL, BDG, GLL, LL} for multiplicative Brownian noise, in \cite{BDG-Levy,  BWWZ} for L\'evy processes with jumps, and in \cite{BDG, GHR} for fractional Brownian motion. A slightly different notion of singularity-namely, piecewise Lipschitz coefficients-has also been investigated, with convergence results obtained in works such as \cite{LS, TL, mrl, PSS}. We emphasize that this is only a brief selection from a vast body of literature, and we encourage readers to consult the cited works for further details and insights.

\subsection*{Organization of the paper}
In \cref{sec:Notation-result} we introduce the necessary notations and main results. \cref{sec:auxi-tool} collects all of the crucial tools and properties of fBM. We present the central analysis and the proof of the main convergence result in \cref{sec:proof}. Finally, we show optimality in \cref{sec:optimal}. \cref{app} contains several auxiliary proofs.

\section{Preliminaries and main results}\label{sec:Notation-result}
\subsection{Preliminaries}

On finite dimensional vector spaces we always use the Euclidean norm.

For $k\in\mathbb{N}$, $f:\mathbb{R}^d\mapsto \mathbb{R}$, denote $\partial_k f(x)\coloneqq\frac{\partial f(x)}{\partial x_k}$ for $x\in\mathbb{R}^d$ and   $\nabla f(x)\coloneqq(\partial_if(x))_{1\leq i\leq d}$, the derivative is understood in the weak sense.
For vector-valued $f$ we use the same notation, and
$\nabla^k f$ is defined via $\nabla(\nabla ^{k-1}f)$ iteratively. For a multi-index $k=(k_1,\ldots,k_d)\in\mN^d$, denote $\partial^k f(x)\coloneqq\frac{\partial^{|k|} f(x)}{\partial x_{k_1}\cdots\partial x_{k_d}}$. If $k=(0,\ldots,0)$, we use convention $\partial^kf=f$.
We denote by ${\cC}^\infty_0$ (${\cC}_p^\infty$, resp.) the set of all continuously infinitely differentiable functions that, along with all of their partial derivatives, are compactly supported (of polynomial growth, resp.).

For $\alpha\in(0,1)$,  we set $\mathcal{C}^\alpha(\mR^d)$ to be the space of continuous functions such that
\begin{align*}
  \Vert f\Vert_{\mathcal{C}^\alpha}\coloneqq[f]_{\mathcal{C}^\alpha}+\sup_{x\in\mR^d}|f(x)|\coloneqq\sup_{x,y\in\mR^d,x\neq y}\frac{|f(x)-f(y)|}{|x-y|^\alpha}+\sup_{x\in\mR^d}|f(x)|<\infty.
\end{align*}
    Here, and often below, we write $\mathcal{C}^\alpha$ instead of $\mathcal{C}^\alpha(\mR^d)$ for simplicity.
    For $\alpha\in(0,\infty)$, we define $\mathcal{C}^\alpha(\mR^d)$ the space of all functions $f$ defined on $\mR^d$ having bounded derivatives $\partial^k f$ for multi-indices $k\in\mN^d$ with $|k|\leq \alpha$ so that
        \begin{align*}
            \Vert f\Vert_{\mathcal{C}^\alpha}&\coloneqq\|f\|_{\mathcal{C}^{ \lfloor\alpha\rfloor}}+[f]_{\mathcal{C}^\alpha}\coloneqq\sum_{|k|\leq \lfloor\alpha\rfloor}\sup_{x\in\mR^d}|\partial^kf(x)|+\sum_{k=\lfloor\alpha\rfloor} [\partial^kf]_{\mathcal{C}^{\{\alpha\}}}<\infty,
        \end{align*}
    where $\{\alpha\}:=\alpha-\lfloor\alpha\rfloor$. Note that the $\mathcal{C}^\alpha$-norm always includes the supremum of the function.
 We also denote the space of bounded measurable functions $\mathcal{C}^0(\mR^d)$  with the supremum norm.
    To be noticed that the functions in $\mathcal{C}^0$ do not need to be continuous.

Let $n\in\mN$ and $\alpha,\beta\in(0,1)$ such that $\alpha+\beta>1$. Then for $f\in \cC^\alpha([0,T],\R^n)$, $g\in \cC^\beta([0,T],\R^n)$, the {\it Young integral}
 $h_t=\int_{0}^{t}f_tdg_t$
is well-defined as the limit as $m\rightarrow\infty$ of the Riemann sums
\begin{align*}
  \sum_{i=0}^{m}f_{t_i^m}(g_{t_{i+1}^m\wedge t}-g_{t_{i}^m\wedge t})
\end{align*}
where $(t_i^m)_{0\leq m}$ is any partition sequence of $[0,t]$. And the Young integral satisfies the estimates: there exists the constant $C$ depends only on $\alpha,\beta$ so that for all $0\leq s\leq t\leq T,$
\begin{align*}
  |h_t-h_s-f_s(g_t-g_s)|\leq C|t-s|^{\alpha+\beta}[f]_{\cC^\alpha([s,t])}[g]_{\cC^\beta([s,t])},
\end{align*}
which also yields the following
\begin{align}\label{Young-est}
[h]_{\cC^\beta([s,t])}\leq C\Vert f\Vert_{\cC^\alpha([s,t])}[g]_{\cC^\beta([s,t])}.
\end{align}
        
    In the following we denote the conditional expectation w.r.t. the $\sigma$-algebras of the filtration $(\mathcal{F}_t)_{t\geq0}$ as $\mE^t(\cdot)\coloneqq\mE(\cdot|\mathcal{F}_t), t\geq0$, $\|X\|_{L^p_\omega}:=(\mE|X|^p)^\frac{1}{p}$, $\|X\|_{L^p_\omega|\cF_s}:=(\mE[|X|^p|\cF_s])^\frac{1}{p}$. 
    
    For $p \in [1,\infty]$, $X \in L^p(\Omega,\mR^d)$ and $\mathcal{F}_s$-measurable $\mR^d$ valued random vector $Y$, we have the following inequalities 
        \begin{equation}\label{eq:CJI}
            \|\mE^s X\|_{L^p_\omega} \le \| X\|_{L^p_\omega}
        \end{equation}
    and 
      \begin{equation} \label{eq:condition}
          \| X-\E^s X \|_{L^p_\omega|\mathcal{F}_s} \le 2\| X-Y \|_{L^p_\omega|\mathcal{F}_s}\quad a.s.
      \end{equation}
 Let $f:[0,1]\times\Omega\rightarrow \mathbb R^d$ be a measurable function adapted to the filtration $(\mathcal F_t)_{t\ge0}$, $\gamma\in(0,1]$, $p\ge2$ and $[S,T]\subset[0,1]$. We give the following definitions:
        \begin{equation*}
            \begin{aligned}
                &\| f \|_{C_p^0[S,T]}\coloneqq\sup_{r \in [S,T]} \| f(r) \|_{L^p_\omega};\\
                &[f]_{C_p^{\gamma}[S,T]} \coloneqq \sup_{r_1, r_2 \in [S,T],r_1\neq r_2} \frac{\| \partial^{\lfloor \gamma \rfloor} f(r_1)-\partial^{\lfloor \gamma \rfloor}f(r_2) \|_{L^p_\omega}}{|r_1-r_2|^{\{\gamma\}}};\\
                &\| f \|_{C_p^{\gamma}[S,T]}\coloneqq\| f \|_{C_p^0[S,T]}+[f]_{C_p^{\gamma}[S,T]}.
            \end{aligned}
        \end{equation*}
    If $f$ is an adapted process, we choose $Y$ in~\eqref{eq:condition} as the value at $t$ of the Taylor expansion of $f$ at $s$ up to order $\lfloor \gamma \rfloor$ and we obtain
    \begin{equation}\label{eq:fregularity}
        \| f_t-\E^s f_t \|_{L^p_\omega} \le 2|t-s|^{\gamma}[f]_{{C}^{\gamma}_p[s,t]}.
    \end{equation}

In proofs, the notation $a\lesssim b$ (respectively $a\asymp b$) abbreviates the existence of $C>0$ such that $a\leq C b$ (respectively $C^{-1}b\leq a\leq C b$), such that moreover $C$ depends only on the parameters claimed in the corresponding statement.
\subsection{Main results}
By scaling we can always take $t\in[0,1]$ without lost of generality. 
Our main assumption and results are stated as follows. 
\begin{assumption}
    \label{ass:main1}
  Let $H\in (1,\infty)\backslash\mZ$, $b\in \cC^\alpha$ with $\alpha\in(1-\frac{1}{2H},1]$. 
\end{assumption}
Notice that following \cite{GG}, under the assumption above, there exists a unique strong solution to \eqref{eq:SDE}. Here is our numerical approximation result for it. 
\begin{theorem}
    \label{thm:main}
Let $(X_t)_{t\in[0,1]}, (X_t^n)_{t\in[0,1]}$ be the solutions to \eqref{eq:SDE} and \eqref{eq:SDE-EM} accordingly.  Suppose \cref{ass:main1} holds. Then for every $p\geq1$ and $\gamma<1+(\alpha-1)H$, we have
        \begin{align}
            \label{est:strong-main}
             \| X-X^n \|_{C_p^{\gamma}[0,1]} \le C |x_0-x_0^n| +C n^{-1},
        \end{align}
      where $C=C(p,d,\alpha,H,\|b\|_{\cC^\alpha})$.
\end{theorem}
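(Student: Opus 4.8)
The plan is to derive \eqref{est:strong-main} by running a stochastic sewing argument \emph{directly} on the error process $\psi:=X-X^n$, which is how one bypasses both the absence of a usable Girsanov transform and the non-Markovianity of $B^H$ for $H>1$. Subtracting \eqref{eq:SDE-EM} from \eqref{eq:SDE},
\[
\psi_t=(x_0-x_0^n)+\int_0^t\big(b(X_r)-b(X^n_{k_n(r)})\big)\,\mathrm{d} r .
\]
First I would record the elementary a priori facts: since $b\in\cC^\alpha$ is bounded and, for $H>1$, $B^H$ has $\cC^1$ (indeed smoother) sample paths on $[0,1]$, both $X$ and $X^n$ are Lipschitz in time with a constant depending only on $\|b\|_{\cC^\alpha}$, uniformly in $n$; in particular $|X^n_r-X^n_{k_n(r)}|\lesssim n^{-1}$ pathwise, and — this is the structural point used throughout — each of $X$ and $X^n$ is $B^H$ plus a Lipschitz curve (its own drift integral). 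Next I would split, on a generic subinterval $[s,t]\subset[0,1]$,
\[
\psi_t-\psi_s=\underbrace{\int_s^t\big(b(X_r)-b(X^n_r)\big)\,\mathrm{d} r}_{R_{s,t}}+\underbrace{\int_s^t\big(b(X^n_r)-b(X^n_{k_n(r)})\big)\,\mathrm{d} r}_{Q_{s,t}},
\]
where $R_{s,t}$ carries the regularisation mechanism and $Q_{s,t}$ is the quadrature error.

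The core input, which is the content of \cref{sec:auxi-tool}, is a quantitative regularisation estimate for the averaged field of $B^H$. Although $B^H$ is smooth for $H>1$, its $\lfloor H\rfloor$-th derivative is the genuinely rough fractional Brownian motion $B^{H-\lfloor H\rfloor}$, and the resulting (degenerate) local non-determinism makes the conditional law of $B^H_r$ given $\cF_s$ a Gaussian spread at scale $(r-s)^{H}$ in every direction. Combined with heat-kernel smoothing and the stochastic sewing lemma \cite{Le}, this gives, for $f\in\cC^\alpha$ and $\cF_s$-measurable $\xi,\xi'\in\mR^d$, bounds of the schematic shape
\[
\Big\|\,\mE^s\!\!\int_s^t\big(f(B^H_r+\xi)-f(B^H_r+\xi')\big)\,\mathrm{d} r\,\Big\|_{L^p_\omega|\cF_s}\lesssim \|f\|_{\cC^\alpha}\,|\xi-\xi'|\,(t-s)^{\theta},\qquad \theta:=1-(1-\alpha)H,
\]
the stochastic sewing lemma moreover upgrading this to the same bound for the integral $\int_s^t(\,\cdots\,)\,\mathrm{d} r$ itself; analogously one obtains a discretisation bound of order $\|f\|_{\cC^\alpha}\,n^{-1}(t-s)^{1/2}$ for $\int_s^t\!\big(f(B^H_r+\xi)-f(B^H_{k_n(r)}+\xi)\big)\,\mathrm{d} r$. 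The standing hypothesis $\alpha>1-\tfrac1{2H}$ is precisely what forces $\theta>\tfrac12$: it upgrades the dependence on the spatial shift from $\alpha$-Hölder to \emph{linear} while keeping a power of $|t-s|$ strictly above $\tfrac12$, which is what lets a Gronwall-type estimate close even though $b$ is merely Hölder (it also makes $\tfrac12+\alpha H>1$, so $n^{-1}$ is already the best rate one could hope for, cf.\ \cite{BDG} and \cref{sec:optimal}).

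With these ingredients I would bound $R_{s,t}$ and $Q_{s,t}$ via \eqref{eq:condition}, \eqref{eq:fregularity} and the estimates above. Writing $X_r=B^H_r+(X_s-B^H_s)+O(|t-s|)$ on $[s,t]$ and similarly for $X^n$, with $X_s-X^n_s=\psi_s$, one gets $\|R_{s,t}\|_{L^p_\omega}\lesssim N(t-s)^{\theta}\|\psi_s\|_{L^p_\omega}+N(t-s)^{\theta+1/2}[\psi]_{C^{1/2}_p[s,t]}+(\text{higher order})$, while $|X^n_r-X^n_{k_n(r)}|\lesssim n^{-1}$ together with the discretisation bound gives $\|Q_{s,t}\|_{L^p_\omega}\lesssim Nn^{-1}(t-s)^{1/2}$. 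Dividing by $(t-s)^{1/2}$ and taking the supremum over $s,t\in[S,T]$ yields a self-improving inequality for $[\psi]_{C^{1/2}_p[S,T]}$ in which, once $T-S$ is small enough (depending only on $p,d,\alpha,H,\|b\|_{\cC^\alpha}$), the coefficient in front of $[\psi]_{C^{1/2}_p[S,T]}$ on the right is at most $\tfrac12$; solving for $[\psi]_{C^{1/2}_p[S,T]}$ on each block of a fixed finite partition of $[0,1]$ and then chaining the blocks through the endpoint values $\|\psi_S\|_{L^p_\omega}$ delivers \eqref{est:strong-main}, the initial discrepancy $|x_0-x_0^n|$ being propagated through the finitely many blocks.

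The step I expect to be the main obstacle is the reduction of $R_{s,t}$ (and of $Q_{s,t}$) to the \emph{frozen-shift} averaged fields above: the shifts genuinely occurring in $b(X_r)-b(X^n_r)$ are $r$-dependent and not $\cF_s$-measurable, so one has to show that replacing $X_r,X^n_r$ by $B^H_r$ plus their $\cF_s$-frozen values at time $s$ generates only remainders of higher order in $|t-s|$, and likewise that the $\cF_{k_n(r)}$-measurable, grid-dependent perturbation hidden inside $X^n_r-X^n_{k_n(r)}$ can be absorbed without losing the power of $n$. This needs a priori Hölder control of $X-X^n$, which is exactly the object being estimated, so the argument is genuinely circular and is rescued only by the buckling structure of the previous paragraph; making it rigorous requires careful bookkeeping of how the Hölder moduli of the drift parts feed through the sewing remainders, and it is here — not merely in the linearisation — that the strict inequality $\alpha>1-\tfrac1{2H}$ (rather than the borderline case) is essential. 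Non-Markovianity, in contrast, never surfaces as an obstruction, since the stochastic sewing lemma does not use it.
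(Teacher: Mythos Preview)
Your proposal is correct in spirit and matches the paper's strategy closely: decompose the increment of $\psi=X-X^n$, estimate each piece by stochastic sewing using the Gaussian smoothing of $B^H$, and close with a buckling argument on $[\psi]_{C^{1/2}_p}$. Two differences are worth recording.

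First, the paper splits into three pieces rather than your two, inserting the intermediate point $B^H_r+\varphi^n_{k_n(r)}$ so that your $Q_{s,t}$ becomes $\cE^{b,n,2}+\cE^{b,n,3}$ (drift discretisation and noise discretisation separated). This is cosmetic; your two-term split would also work.

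Second --- and this is the resolution of the obstacle you correctly flag in your last paragraph --- the paper does \emph{not} freeze the drift parts at time $s$. It freezes at the \emph{shifted} time $s_1:=s-(t-s)$, taking $A_{s,t}=\mE^{s_1}\int_s^t\big(b(B^H_r+\mE^{s_1}\varphi_r)-b(B^H_r+\mE^{s_1}\varphi^n_r)\big)\,\mathrm{d} r$, and applies the shifted stochastic sewing lemma of \cite{GG} with $M=1$. The shift is not a stylistic choice: verifying the $\delta A$ condition requires the second-order heat-kernel bound \eqref{eq:heat2}, which produces a factor $(r-s_\ast)^{-(2-\alpha)H}$ with $s_\ast$ the conditioning time, and since $(2-\alpha)H>1$ for $H>1$ this is non-integrable if $s_\ast$ coincides with the lower endpoint of the $r$-integral. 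With the shift, $r-s_\ast$ stays comparable to $t-s$ throughout, so the singularity is harmless. The a priori bounds $\|\varphi_t-\mE^{s}\varphi_t\|_{L^p_\omega|\cF_s}\lesssim|t-s|^{1+\alpha H}$ (and the analogue for $\varphi^n$, proved by a short induction) then supply exactly the extra powers of $|t-s|$ needed to push the $\delta A$ exponent above $1$. Your instinct that the $r$-dependent shifts are the crux is right; the shifted conditioning is the mechanism that makes it go through.
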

\begin{remark}
    As an easy application of Kolmogorov continuity criteria we can also conclude from \eqref{est:strong-main} the following:
    \begin{align*}
       \big \|\sup_{t\in[0,1]}|X_t-X_t^n|\big\|_{L_\omega^p}\leq C |x_0-x_0^n| +C n^{-1}.
    \end{align*}
\end{remark}
The following theorem settles the question on optimality. 

\begin{theorem}
    \label{thm:main1} Let $(X_t)_{t\in[0,1]}, (X_t^n)_{t\in[0,1]}$ be the solutions to \eqref{eq:SDE} and \eqref{eq:SDE-EM} accordingly and $x_0=x_0^n$.  Suppose \cref{ass:main1} holds.  Then for every $f\in \cC^\alpha(\mR^d)$ there exists a $(\cF_t)_{t\in[0,1]}$-adapted process $(\cH f)^X_{t\in[0,1]}\in C_p^{1+(\alpha-1)H-}[0,1]$ for every $p\ge 2$ such that bounded linearly 
    \begin{align*}
       \cC^{\alpha}\ni f\mapsto  (\cH f)^X\in C_p^{1+(\alpha-1)H-}[0,1]
    \end{align*}
    and  if $g\in\cC^1(\mR^d)$, with probability one
         \begin{align*}
             (\cH g)_t^X=\int_0^t\nabla g(X_s)\mathrm{d} s,\quad t\in[0,1];
         \end{align*}
    moreover, we have for any $t\in[0,1]$, in probability
        \begin{align}
            \label{eq:optimal}
            \lim_{n\rightarrow\infty} n(X_t-X_t^n)=:c(t) 
        \end{align}
     exists and $c\in \cC^{1+(\alpha-1)H-}([0,1])$ satisfies
        \begin{equation}\label{opt-ODE}
            c(t)=  \int_0^tc(s)\mathrm{d} (\cH b)^X_s+ \frac 12 (b(X_t)-b(x_0)), \quad t\in[0,1],
        \end{equation} 
        where the integral inside \eqref{opt-ODE} is defined in the sense of Young.
\end{theorem}
\noindent{\bf Idea of the analysis.} The detailed proofs of \cref{thm:main} and \cref{thm:main1} will be given in \cref{sec:proof} and \cref{sec:optimal}, respectively. Here we only outline the main ideas.  
\begin{enumerate}
    \item 
(Convergence) 
Observe that for any $p\geq 1$, for $0\leq s \leq t\leq1$,
\begin{align*}
   \big \|(X_t-X_t^n)-(X_s-X_s^n)\big\|_{L^p_\omega}= \big\|(\varphi_t-\varphi_t^n)-(\varphi_s-\varphi_s^n)\big\|_{L^p_\omega},
\end{align*}
where 
\begin{align*}
    &\varphi_t:=X_t-B_t^H=\int_0^tb(X_r)\mathrm{d} r=\int_0^tb(\varphi_r+B_r^H)\mathrm{d} r,\\
    &\varphi_t^n:=X_t^n-B_t^H=\int_0^tb(X_{k_n(r)}^n)\mathrm{d} r=\int_0^tb(\varphi_{k_n(r)}^n+B_{k_n(r)}^H)\mathrm{d} r,
\end{align*}
meanwhile, we know that our aim is to remove the $\cC^1$ regularity requirement on $b$ via proper use of the regularization coming from $B^H$, namely the Gaussian density $p_{c(H)t^{2H}}$, which is infinitely smoothing (see \cref{subsec:fbm}). Also keep in mind that in the current setting the Girsanov Theorem is {\it not} available. What we have learned from \cite{GG, BDG-Levy} is that we can achieve this goal via {\it freezing} the exponent $\varphi_r$ inside the integral $\int_0^tb(\varphi_r+B_r^H)\mathrm{d} r$ (and similarly for $\varphi^n_{k_n(r)}$ inside the integral $\int_0^tb(\varphi^n_{k_n(r)}+B_r^H)\mathrm{d} r$)
by {\it taking conditional expectation} in the framework of the {\it SSL}. That is to say, heuristically, for $t-s$ small enough,
\begin{align*}
  \int_s^tb(\varphi_r+B_r^H)\mathrm{d} r &\overset{\|\cdot\|_{L^p_\omega}} {\approx}\int_s^t\mE^{s-(t-s)}b(\mE^{s-(t-s)}\varphi_r+B_r^H)\mathrm{d} r,\\
  \int_s^tb(\varphi^n_{k_n(r)}+B_r^H)\mathrm{d} r&\overset{\|\cdot\|_{L^p_\omega}}{\approx} \int_s^t\mE^{s-(t-s)}b(\mE^{s-(t-s)}\varphi^n_{k_n(r)}+B_r^H)\mathrm{d} r,
\end{align*}
and this ``$\overset{\|\cdot\|_{L^p_\omega}} {\approx}$'' is justified by the SSL (see \cref{lem:sewing-2} below) by taking
\begin{align*}
    A_{s,t}:=\int_s^t\mE^{s-(t-s)}b(\mE^{s-(t-s)}\varphi_r+B_r^H)\mathrm{d} r, \quad \cA_{s,t}:=\int_s^tb(\varphi_r+B_r^H)\mathrm{d} r,
\end{align*}
and similarly for $ \int_s^tb(\varphi^n_{k_n(r)}+B_r^H)\mathrm{d} r$. Then, together with the property of the Gaussian density $p_{c(H)t^{2H}}$ of fBM ( $\mathcal{P}_t^Hf:=p_{c(H)t^{2H}}\ast f$), we can further write 
\begin{align*}
    A_{s,t}= \int_s^t \mathcal{P}^H_{r-[s-(t-s)]} b (\E^{s-(t-s)} B^H_r + \E^{s-(t-s)} \varphi_r)\mathrm{d} r.
\end{align*}
Now we can see that instead of dealing with $b$ directly, we gain additional regularity in $\mathcal{P}^H_{t} b$ due to the smoothing effect of the convolution with $p_{c(H)t^{2H}}$. 

Although the full analysis later also contains many technical details, the core of the argument is clear. In the end, we are able to turn the idea above into a proof of the convergence rate in the following form:
\begin{align*}
    \big\|(X_t-X_t^n)-(X_s-X_s^n)\big\|_{L^p_\omega}=\big\|\varphi_t-\varphi_t^n\big\|_{L^p_\omega} &\leq  C ( \| \varphi-\varphi^n \|_{C^{\gamma}_p[s,t]} + n^{-1}) |t-s|^{\gamma+\varepsilon}\\
    &=C ( \| X-X^n \|_{C^{\gamma}_p[s,t]} + n^{-1}) |t-s|^{\gamma+\varepsilon}
\end{align*}
for sufficiently small $\varepsilon$. Therefore, we obtain \eqref{est:strong-main} after finely dividing the interval $[0,1]$ and applying the above estimate on each sub-interval.

\item (Optimality) Our idea for verifying optimality is to establish a limit theorem for the asymptotic error distribution of $X^n$ and $X$, that is, to show that $n(X^n-X)$ converges to a nontrivial limit, indicating that order $n$ is the optimal convergence rate of $X^n$ from \eqref{eq:SDE-EM} to $X$.   

For this, morally, if the coefficient $b$ is smooth, we can show that there exists a nonzero process $c(t)$ such that $c(t)=\lim_{n\rightarrow\infty} n(X_t-X_t^n)$ in probability
and it satisfies a random ODE
\begin{align}
    \label{eq:ODE1}c(t)=  \int_0^t\nabla b (X_{s})c(s)ds+ \frac 12 (b (X_{t})-b(x_0)).
\end{align}
At this point, we already see the difficulty in our setting: $b\in \cC^\alpha$ with $\alpha\in(1-\frac{1}{2H},1)$ implies that $\nabla b (X)$ is ill-defined in the above equation. 
        
However, if we reformulate \eqref{eq:ODE1} equivalently as follows: denote $H_t:=\int_0^t \nabla b(X_s)\mathrm{d} s$,
\begin{align}
    \label{eq:ODE2} c(t)= \int_0^tc(s)\mathrm{d} H_s+ \frac 12 (b (X_{t})-b(x_0)),\end{align}
then, due to the regularization effect, we can show that a.s. $H$ is $\cC^{1+(\alpha-1)H-}$-H\"older continuous (see \cref{lem:reg}), which implies that the term $\int_0^tc(s)\mathrm{d} H_s$ can be defined in the sense of the Young integral. Therefore, we can claim that \eqref{eq:ODE1} is well defined, given that \eqref{eq:ODE2} is a linear equation. In this way, we also show that $n(X^n-X)$ indeed converges to a nontrivial limit and thus that order $n$ is optimal. 

Let us also point out that the general idea of this part is close to that in \cite{DGL-CLT}. However, here we are dealing with additive fBM noise, whereas \cite{DGL-CLT} considers central limit theorem type results that heavily rely on the multiplicative structure of the noise; therefore, the detailed and essential analysis deviates. In the end, unlike the weakly convergence result from \cite{DGL-CLT}, we are able to show that $n(X^n_t-X_t)$  converges to $c(t)$ in probability.
\end{enumerate}

\section{Auxiliary Tools}\label{sec:auxi-tool}
In this section, we primarily introduce our main tool which is the stochastic sewing lemma and present some properties of the fractional Brownian motions.
\subsection{Stochastic Sewing Lemma}  Given $ M \geq 0$ we define $[S,T]_M^2=\{ (s,t)|S \le s < t \le T, s-M(t-s)\ge S \}$ and $\overline{[S,T]}_M^3=\{ (s,u,t)|(s,t) \in [S,T]_M^2, (u-s)\wedge(t-u)\ge \frac{t-s}{3}\}$.
\begin{lemma}\cite[Lemma {2.2}]{GG} \label{lem:sewing-2}
    Let $0 \leq S<T \leq 1, p \in[2, \infty), M \geq 0$ and let $\left(A_{s, t}\right)_{(s, t) \in[S, T]_{M}^{2}}$ be a family of random variables in $L^{p}\left(\Omega, \mathbb{R}^{d}\right)$ such that $A_{s, t}$ is $\mathcal{F}_{t}$-measurable. Suppose that for some $\varepsilon_{1}, \varepsilon_{2}>0$ and $C_{1}, C_{2}$ the bounds
        \begin{equation}\label{sew:A}
            \big\|A_{s, t}\big\|_{L^{p}_{\omega}}  \leq C_{1}|t-s|^{1 / 2+\varepsilon_{1}}
        \end{equation}
    and
        \begin{equation}\label{sew:deltaA}
            \big\|\mathbf{E}^{s-M(t-s)} \delta A_{s, u, t}\big\|_{L^{p}_{\omega}}  \leq C_{2}|t-s|^{1+\varepsilon_{2}}
        \end{equation}
    hold for all $(s, t) \in[S, T]_{M}^{2}$ and $(s, u, t) \in \overline{[S, T]}_{M}^{3}$, where $\delta A_{s,u,t}:=A_{s,t}-A_{s,u}-A_{u,t}$. Then there exists a unique (up to modification) adapted process $\mathcal{A}:[S, T] \rightarrow L^{p}\left(\Omega, \mathbb{R}^{d}\right)$ such that $\mathcal{A}_{S}=0$ and such that for some constants $K_{1}, K_{2}<\infty$, depending only on $\varepsilon_{1}, \varepsilon_{2}, p, d$, and $M$, the bound
        \begin{equation}\label{eq:sew:A_bound}
            \left\|\mathcal{A}_{t}-\mathcal{A}_{s}\right\|_{L^{p}_{\omega}} \leq K_{1} C_{1}|t-s|^{1 / 2+\varepsilon_{1}}+K_{2} C_{2}|t-s|^{1+\varepsilon_{2}}
        \end{equation}
    holds for all $(s, t) \in[S, T]_{0}^{2}$. Moreover, if there exists any continuous process $\widetilde{\mathcal{A}}$ : $[S, T] \rightarrow L^{p}\left(\Omega, \mathbb{R}^{d}\right), \varepsilon_{3}>0$, and $K_{3}<\infty$, such that $\widetilde{\mathcal{A}}_{S}=0$ and
        \begin{equation} \label{sew:A-A}
            \big\|\widetilde{\mathcal{A}}_{t}-\widetilde{\mathcal{A}}_{s}-A_{s, t}\big\|_{L^{p}_{\omega}} \leq K_{3}|t-s|^{1+\varepsilon_{3}}
        \end{equation}
    holds for all $(s, t) \in[S, T]_{M}^{2}$, then $\widetilde{\mathcal{A}}_{t}=\mathcal{A}_{t}$ for all $S \leq t \leq T$.
\end{lemma}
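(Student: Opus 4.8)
The plan is to build $\mathcal{A}$ as the $L^p_\omega$-limit of Riemann-type sums along dyadic refinements, to read off \eqref{eq:sew:A_bound} from a quantitative control of the error created at each refinement step, and then to prove uniqueness by a closely related estimate. For a partition $\pi=\{s=\tau_0<\dots<\tau_N=t\}$ of $[s,t]$ put $A^\pi_{s,t}:=\sum_i A_{\tau_i,\tau_{i+1}}$; passing from the $k$-th equidistant dyadic partition $\pi_k$ to $\pi_{k+1}$ inserts the midpoints $m_j$ of the intervals $[u_j,v_j]$ of $\pi_k$, so that $A^{\pi_{k+1}}_{s,t}-A^{\pi_k}_{s,t}=-\sum_j\delta A_{u_j,m_j,v_j}$ with $(u_j,m_j,v_j)\in\overline{[S,T]}^3_M$. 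I would split each term as $\delta A_{u,m,v}=\mathbf{E}^{u-M(v-u)}\delta A_{u,m,v}+R_{u,m,v}$. The conditional-expectation parts are summed by the triangle inequality using \eqref{sew:deltaA}; over the $2^k$ intervals of length $2^{-k}(t-s)$ this is $\lesssim C_2|t-s|^{1+\varepsilon_2}2^{-k\varepsilon_2}$, summable in $k$.

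The stochastic heart of the argument is the remainder $\sum_j R_{u_j,m_j,v_j}$, where $R_{u,m,v}=\delta A_{u,m,v}-\mathbf{E}^{u-M(v-u)}\delta A_{u,m,v}$ is $\mathcal{F}_{v}$-measurable with $\mathbf{E}^{u-M(v-u)}R_{u,m,v}=0$. Because the conditioning is shifted back by $M(v-u)$, the $R$'s are not outright martingale differences; I would color the indices into $\lceil M\rceil+1$ classes so that within each class the conditioning time of one interval exceeds the right endpoint of the previous one, whence the tower property makes the partial sums an $L^p_\omega$-martingale. The conditional Burkholder--Davis--Gundy inequality and Minkowski's inequality in $L^{p/2}_\omega$ then give $\|\sum_j R_{u_j,m_j,v_j}\|_{L^p_\omega}^2\lesssim\sum_j\|R_{u_j,m_j,v_j}\|_{L^p_\omega}^2$; bounding each $\|R_{u,m,v}\|_{L^p_\omega}\lesssim C_1|v-u|^{1/2+\varepsilon_1}$ through \eqref{sew:A} and summing the squares over the $2^k$ intervals yields $\lesssim C_1|t-s|^{1/2+\varepsilon_1}2^{-k\varepsilon_1}$, again summable in $k$. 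Summing both contributions over $k\ge0$ shows $(A^{\pi_k}_{s,t})_k$ is Cauchy; its limit $\mathcal{A}_{s,t}$ satisfies $\|\mathcal{A}_{s,t}-A_{s,t}\|_{L^p_\omega}\lesssim C_1|t-s|^{1/2+\varepsilon_1}+C_2|t-s|^{1+\varepsilon_2}$, which together with \eqref{sew:A} gives \eqref{eq:sew:A_bound}.

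To assemble a process, note $A^\pi_{s,t}$ is $\mathcal{F}_t$-measurable, so $\mathcal{A}_{s,t}$ is too; additivity $\mathcal{A}_{s,t}=\mathcal{A}_{s,u}+\mathcal{A}_{u,t}$ follows by concatenating partitions and passing to the limit, which forces one to run the convergence over general meshes (not only dyadic) and to handle the left endpoint with care, since $s-M(t-s)\ge S$ fails for pairs based at $S$; working on $[S,T]^2_M$ and extending by additivity and $L^p_\omega$-continuity resolves this. Setting $\mathcal{A}_t:=\mathcal{A}_{S,t}$ with $\mathcal{A}_S=0$ gives an $L^p_\omega$-continuous adapted process obeying \eqref{eq:sew:A_bound}. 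For uniqueness I would first record that $\mathbf{E}^{s-M(t-s)}$ annihilates every $R$ (tower property, using $u-M(v-u)\ge s-M(t-s)$), so that $\|\mathbf{E}^{s-M(t-s)}(\mathcal{A}_t-\mathcal{A}_s-A_{s,t})\|_{L^p_\omega}\lesssim C_2|t-s|^{1+\varepsilon_2}$. Given any $\widetilde{\mathcal{A}}$ satisfying \eqref{sew:A-A}, the additive increment $B_{s,t}:=(\widetilde{\mathcal{A}}_t-\widetilde{\mathcal{A}}_s)-(\mathcal{A}_t-\mathcal{A}_s)$ then satisfies $\|B_{s,t}\|_{L^p_\omega}\lesssim|t-s|^{1/2+\varepsilon_1}+|t-s|^{1+(\varepsilon_2\wedge\varepsilon_3)}$ and, by \eqref{eq:CJI}, $\|\mathbf{E}^{s-M(t-s)}B_{s,t}\|_{L^p_\omega}\lesssim|t-s|^{1+(\varepsilon_2\wedge\varepsilon_3)}$. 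Running the same conditional-expectation-plus-martingale splitting on $B_{s,t}=\sum_iB_{\tau_i,\tau_{i+1}}$ over a uniform $N$-point partition bounds $\|B_{s,t}\|_{L^p_\omega}$ by $\lesssim|t-s|^{1/2+\varepsilon_1}N^{-\varepsilon_1}+|t-s|^{1+(\varepsilon_2\wedge\varepsilon_3)}N^{-(\varepsilon_2\wedge\varepsilon_3)}\to0$, so $B\equiv0$ and $\widetilde{\mathcal{A}}=\mathcal{A}$.

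The main obstacle is the martingale estimate for $\sum_j R_{u_j,m_j,v_j}$: organizing the coloring so that the back-shifted conditioning $\mathbf{E}^{u-M(v-u)}$ genuinely yields martingale differences, and invoking the conditional Burkholder--Davis--Gundy inequality to replace the pathwise triangle bound $\sum_j\|R_{u_j,m_j,v_j}\|_{L^p_\omega}\lesssim C_1|t-s|^{1/2+\varepsilon_1}2^{k(1/2-\varepsilon_1)}$, which diverges when summed over scales, by the square-function bound of order $C_1|t-s|^{1/2+\varepsilon_1}2^{-k\varepsilon_1}$, which is summable. This square-summability, with no counterpart in the deterministic sewing lemma, is precisely what lets the admissible regularity exponent drop to $1/2+\varepsilon_1$.
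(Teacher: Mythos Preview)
The paper does not supply a proof of this lemma: it is quoted verbatim from \cite[Lemma~2.2]{GG} and used as a black box, so there is nothing in the paper to compare your argument against. Your outline is the standard route to stochastic sewing in the shifted form of \cite{GG}---dyadic Riemann sums, the decomposition of $\delta A$ into its $\mathbf{E}^{s-M(t-s)}$-projection and a centred remainder, a finite colouring of the dyadic cells into $\lceil M\rceil+1$ classes so that within each class the back-shifted conditioning time of one cell dominates the right endpoint of the previous one, and then BDG plus Minkowski in $L^{p/2}_\omega$ on each class; the uniqueness step via the same splitting applied to the additive $B_{s,t}$ is also the usual one. I see no genuine gap; the only points that deserve care when you write it out are (i) handling the first few cells of each dyadic level where $s-M(t-s)$ may fall below $S$ (working on $[S,T]^2_M$ first and extending by additivity, as you indicate), and (ii) passing from dyadic to arbitrary partitions to get exact additivity $\mathcal{A}_{s,t}=\mathcal{A}_{s,u}+\mathcal{A}_{u,t}$, which is a routine but slightly fiddly density argument.
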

\subsection{Fractional Brownian motions}\label{subsec:fbm}
We recall the following properties concerning $B^H$   that have been used heavily in later analysis.

\begin{lemma}\cite[Proposition {2.1}]{GG}
    For any $H \in (0,\infty) \textbackslash \mathbb{Z}$ there exists a constant $c(H)$ such that for all $0 \le s \le t \le 1$ one has 
        \begin{align}\label{eq:rep1}
            \E |B_t^H-\E^s B_t^H |^2=dc(H) |t-s|^{2H} \text{ and } B_t^H-\E^s B_t^H \text{ is independent of } \mathcal F_s.
        \end{align}
\end{lemma}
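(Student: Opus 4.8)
The plan is to exploit that in both regimes $B^H$ is a Wiener integral against the underlying two-sided Brownian motion $W$ with a \emph{deterministic} kernel, so that conditioning on $\cF_s=\sigma(W_u:u\le s)$ reduces to splitting that integral at time $s$: the part over $(-\infty,s]$ is $\cF_s$-measurable and gives $\E^sB_t^H$, while the part over $(s,t]$ is centered and independent of $\cF_s$. Both assertions in \eqref{eq:rep1} then follow from this splitting, the variance by the It\^o isometry.

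I first treat $H\in(0,1)$ directly from \eqref{def:fBM-Hsmall1}. Writing $B_t^H=\int_{-\infty}^t K_t(r)\,\mathrm{d} W_r$, with $K_t(r)=(t-r)^{H-1/2}$ for $0\le r\le t$ and the Mandelbrot--van Ness correction $|t-r|^{H-1/2}-|r|^{H-1/2}$ for $r<0$, I split $B_t^H=\int_{-\infty}^sK_t(r)\,\mathrm{d} W_r+\int_s^t(t-r)^{H-1/2}\,\mathrm{d} W_r$. The first summand is $\cF_s$-measurable, hence equals $\E^sB_t^H$, and the second is a Wiener integral over $(s,t]$, measurable with respect to $\sigma(W_w-W_s:s\le w\le t)$ and therefore independent of $\cF_s$. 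Thus $B_t^H-\E^sB_t^H=\int_s^t(t-r)^{H-1/2}\,\mathrm{d} W_r$, and the It\^o isometry applied coordinatewise gives $\E|B_t^H-\E^sB_t^H|^2=d\int_s^t(t-r)^{2H-1}\,\mathrm{d} r=\tfrac{d}{2H}|t-s|^{2H}$, which is \eqref{eq:rep1} with $c(H)=\tfrac{1}{2H}$.

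For $H>1$ I reduce to the case just handled. Put $k:=\lfloor H\rfloor\ge1$ and $H_0:=H-\lfloor H\rfloor\in(0,1)$; the Cauchy formula for iterated integration rewrites \eqref{def:fbmHbig1} as the single integral $B_t^H=\tfrac{1}{(k-1)!}\int_0^t(t-r)^{k-1}B_r^{H_0}\,\mathrm{d} r$. Since $0\le s$, for $r\le s$ the integrand $B_r^{H_0}$ is already $\cF_s$-measurable, while for $r>s$ the previous step yields $B_r^{H_0}-\E^sB_r^{H_0}=\int_s^r(r-w)^{H_0-1/2}\,\mathrm{d} W_w$. Subtracting $\E^sB_t^H$ and interchanging the Lebesgue and stochastic integrals over the bounded triangle $\{s\le w\le r\le t\}$ by the stochastic Fubini theorem, I get
\[
B_t^H-\E^sB_t^H=\int_s^t\Big(\tfrac{1}{(k-1)!}\int_w^t(t-r)^{k-1}(r-w)^{H_0-1/2}\,\mathrm{d} r\Big)\mathrm{d} W_w=C_H\int_s^t(t-w)^{H-1/2}\,\mathrm{d} W_w,
\]
where the substitution $r=w+\rho(t-w)$ evaluates the inner integral as $(t-w)^{k+H_0-1/2}\,\mathrm{B}(H_0+\tfrac12,k)$, so that $C_H=\tfrac{1}{(k-1)!}\mathrm{B}(H_0+\tfrac12,k)$ with $\mathrm{B}$ the Beta function and $k+H_0=H$. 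The right-hand side is again a Wiener integral over $(s,t]$, hence independent of $\cF_s$, and the It\^o isometry gives $\E|B_t^H-\E^sB_t^H|^2=d\,C_H^2\int_s^t(t-w)^{2H-1}\,\mathrm{d} w=\tfrac{d\,C_H^2}{2H}|t-s|^{2H}$, i.e.\ \eqref{eq:rep1} with $c(H)=C_H^2/(2H)$.

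I expect the main obstacle to be the justification of the stochastic Fubini interchange together with the collapse of the forward-increment kernel to a single power. The interchange is licensed because the kernel is square integrable on the bounded triangle (the singularity $(r-w)^{H_0-1/2}$ has exponent $>-1$), but the conceptual point is that the Beta integral produces exactly $(t-w)^{H-1/2}$ with a constant depending only on $H$; this is precisely what yields the clean scaling $|t-s|^{2H}$ uniformly in $0\le s\le t$, and it is the reduction of the $H>1$ representation to the $H_0\in(0,1)$ fluctuations that makes both the independence and the variance identity transparent.
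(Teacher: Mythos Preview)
Your argument is correct. The paper itself does not supply a proof of this lemma; it simply cites \cite[Proposition~2.1]{GG}, so there is no in-paper proof to compare against. Your derivation---splitting the Mandelbrot--van~Ness Wiener integral at $s$ for $H\in(0,1)$, and for $H>1$ collapsing the iterated integral via the Cauchy formula and stochastic Fubini into a single Wiener integral $C_H\int_s^t(t-w)^{H-1/2}\,\mathrm{d} W_w$ over $(s,t]$---is the natural and standard route to this result.
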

    For any $H \in (0,\infty) \textbackslash \mathbb{Z}$ there exists a constant $C=C(d,H)$ such that for all $0 \le s \le t \le 1$ one has 
        \begin{align}\label{eq:rep}
            \E |B_t^H- B_s^H | \le C |t-s|^{H \wedge 1}.
        \end{align}
        
    We let $p_t(x)$ denote the known heat density $\frac{1}{(2\pi t)^{d/2}}e^{-\frac{|x|^2}{2t}}$ on $\mR^d$ and we define $\cP_t^Hf(x):=(p_{c(H)t^{2H}}\ast f)(x)$, $x\in\mR^d$. Then for any $\mathcal F_s$-measurable $\mR^d$ valued random vector $\xi$, we have
        \begin{equation}\label{eq:EBt}
            \mE^sf(B_t^H+\xi)=\cP_{t-s}^H f(\mE^sB_t^H+\xi).
        \end{equation}

\begin{lemma}\label{heat kernel}
    For $\alpha,\beta \in [0,1],f\in \cC^\alpha, t \in (0,1],$ one has the bounds, with some constant $C$ depending only on $H, \alpha, \beta, d$, accordingly
      \begin{align}
            \label{eq:heat2}
           & |\mathcal{P}_t^H f(x_1)-\mathcal{P}_t^H f(x_2)-\mathcal{P}_t^H f(x_3)+ \mathcal{P}_t^H f(x_4)|
          \notag\\
            &   \le C \|f\|_{\cC^\alpha}\big( t^{H(\alpha-2)} |x_1-x_2||x_1-x_3|  + t^{H(\alpha-1)} |x_1-x_2-x_3+x_4| \big),\forall x_i\in\mR^d, i=1,\ldots,4;
            \\
             \label{est:heat-1}
            &\|\mathcal{P}_t^H f  \|_{\cC^\beta} \le C t^{H(\alpha-\beta)\wedge0}\|f\|_{\cC^\alpha};\\
            \label{est:heat-2}
            & \|(\mathcal{P}_{t}^H-\mathcal{P}_{s}^H )f  \|_{\cC^\beta} \le C s^{H(\alpha-\beta)-2H\delta} |t^{2H}-s^{2H}|^{\delta}\|f\|_{\cC^\alpha}, \forall 0\leq s\leq t\leq 1,  0<\delta\in \left[\frac{\alpha-\beta}{2},1 \right].
        \end{align} 
\end{lemma}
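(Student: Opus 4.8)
\emph{Proof strategy.} The three displays are all extracted from one ``master'' smoothing bound for the Gaussian convolution $\mathcal{P}_t^H f=p_{c(H)t^{2H}}\ast f$. Write $\tau:=c(H)t^{2H}\in(0,c(H)]$ and $P_\sigma g:=p_\sigma\ast g$, so that $\mathcal{P}_t^H=P_\tau$. The plan is first to prove that for every $\gamma\ge0$
\begin{equation}\label{plan:master}
  \|\mathcal{P}_t^H f\|_{\cC^\gamma}\le N\,t^{H((\alpha-\gamma)\wedge0)}\|f\|_{\cC^\alpha},\qquad N=N(\gamma,\alpha,H,d),
\end{equation}
which contains \eqref{est:heat-1} (case $\gamma=\beta\in[0,1]$) as well as the orders $\gamma\in\{1,2\}$ and $\gamma=\beta+2\delta$ needed below. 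For $\gamma\le\alpha$ this is merely the embedding $\cC^\alpha\hookrightarrow\cC^\gamma$ together with the contractivity $\|P_\sigma g\|_{\cC^\gamma}\le\|g\|_{\cC^\gamma}$ of convolution against a probability density, so the exponent is $0$; for $\gamma>\alpha$ one moves the derivatives onto the kernel, uses $\int_{\mR^d}\partial^k p_\sigma(z)\,\dif z=0$ for $|k|\ge1$ to subtract the value of $f$ at the base point, invokes the Gaussian scaling $\int_{\mR^d}|\partial^k p_\sigma(z)|\,|z|^\alpha\,\dif z\asymp\sigma^{(\alpha-|k|)/2}$, and treats the H\"older seminorm of the top derivative by the usual split $|x-y|\le\sqrt\sigma$ versus $|x-y|>\sqrt\sigma$; since $\sigma=c(H)t^{2H}$ this yields the power $t^{H(\alpha-\gamma)}$. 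I expect \eqref{plan:master} to be the only part requiring genuine (though entirely standard) bookkeeping.

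For \eqref{eq:heat2}, set $g:=\mathcal{P}_t^H f$, $h_1:=x_1-x_2$, $h_2:=x_1-x_3$, $e:=x_1-x_2-x_3+x_4$, so that $x_2=x_1-h_1$, $x_3=x_1-h_2$, $x_4=x_1-h_1-h_2+e$. Then
\begin{equation*}
  g(x_1)-g(x_2)-g(x_3)+g(x_4)=\big[g(x_1)-g(x_1-h_1)-g(x_1-h_2)+g(x_1-h_1-h_2)\big]+\big[g(x_1-h_1-h_2+e)-g(x_1-h_1-h_2)\big].
\end{equation*}
Writing the first bracket as $\int_0^1\!\!\int_0^1 h_1^{\top}\nabla^2 g(x_1-\theta_1h_1-\theta_2h_2)h_2\,\dif\theta_1\,\dif\theta_2$ bounds it by $\|\nabla^2 g\|_\infty|h_1||h_2|$, and the second bracket is trivially at most $\|\nabla g\|_\infty|e|$. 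Inserting $\|\nabla g\|_\infty\le\|g\|_{\cC^1}\le Nt^{H(\alpha-1)}\|f\|_{\cC^\alpha}$ and $\|\nabla^2 g\|_\infty\le\|g\|_{\cC^2}\le Nt^{H(\alpha-2)}\|f\|_{\cC^\alpha}$ — both instances of \eqref{plan:master}, using $(\alpha-1)\wedge0=\alpha-1$ and $(\alpha-2)\wedge0=\alpha-2$ since $\alpha\le1$ — gives exactly the right-hand side of \eqref{eq:heat2}.

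For \eqref{est:heat-2}, set $\tau_t:=c(H)t^{2H}\ge\tau_s:=c(H)s^{2H}$ and use the semigroup identity $\mathcal{P}_t^H-\mathcal{P}_s^H=(P_{\tau_t-\tau_s}-I)\mathcal{P}_s^H$. Combine it with the elementary heat-flow continuity estimate $\|(P_\eta-I)h\|_{\cC^\beta}\le N\eta^{\delta}\|h\|_{\cC^{\beta+2\delta}}$ for $\delta\in(0,1]$ — which follows from $P_\eta-I=\tfrac12\int_0^\eta\Delta P_\sigma\,\dif\sigma$ and $\|\Delta P_\sigma\|_{\cC^{\beta+2\delta}\to\cC^\beta}\lesssim\sigma^{\delta-1}$, integrable on $(0,\eta)$ — and with \eqref{plan:master} applied to $h=\mathcal{P}_s^H f$ at order $\gamma=\beta+2\delta$. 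The hypothesis $\delta\ge\frac{\alpha-\beta}{2}$ is precisely what makes $\beta+2\delta\ge\alpha$, so that \eqref{plan:master} is in its genuinely smoothing regime and returns $\tau_s^{(\alpha-\beta-2\delta)/2}$; hence
\begin{equation*}
  \|(\mathcal{P}_t^H-\mathcal{P}_s^H)f\|_{\cC^\beta}\le N(\tau_t-\tau_s)^{\delta}\|\mathcal{P}_s^H f\|_{\cC^{\beta+2\delta}}\le N(\tau_t-\tau_s)^{\delta}\tau_s^{(\alpha-\beta-2\delta)/2}\|f\|_{\cC^\alpha},
\end{equation*}
and substituting $\tau_t-\tau_s=c(H)(t^{2H}-s^{2H})$ and $\tau_s=c(H)s^{2H}$ produces $N|t^{2H}-s^{2H}|^{\delta}s^{H(\alpha-\beta)-2H\delta}\|f\|_{\cC^\alpha}$, i.e.\ \eqref{est:heat-2}. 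The only delicate bit anywhere is tracking the admissible range of $\delta$ and the dependence of the constants on $c(H)$; there is no conceptual obstacle.
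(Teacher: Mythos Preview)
Your proof is correct and follows the standard route one would expect for Gaussian heat-kernel estimates. The paper itself does not actually prove this lemma: it merely cites \cite[(2.8)]{GG} for \eqref{eq:heat2} and \cite[Proposition 3.7 (i)--(ii)]{BDG} for \eqref{est:heat-1} and \eqref{est:heat-2}, remarking only that the bounds follow from the relation $\mathcal{P}_t^H f = p_{c(H)t^{2H}}\ast f$ and standard heat-kernel properties. Your write-up is therefore a self-contained version of exactly what those cited references do: the master smoothing bound \eqref{plan:master} is the content of \cite[Proposition 3.7 (i)]{BDG}; your semigroup factorisation $(P_{\tau_t-\tau_s}-I)\mathcal{P}_s^H$ together with the time-continuity estimate $\|(P_\eta-I)h\|_{\cC^\beta}\lesssim\eta^\delta\|h\|_{\cC^{\beta+2\delta}}$ is the content of \cite[Proposition 3.7 (ii)]{BDG}; and your second-difference decomposition for \eqref{eq:heat2} is how \cite{GG} obtains (2.8). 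So there is no genuine methodological difference---you have simply unpacked the citations.
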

\begin{proof}
     \eqref{eq:heat2} are directly from \cite[Page 2 (2.8)]{GG}. By using properties of Gaussian convolutions, heat kernel bounds and a relation of the
form $\cP_t^Hf(x)=(p_{c(H)t^{2H}}\ast f)(x)$, we get \eqref{est:heat-1} from \cite[Proposition 3.7 (i)]{BDG}.  
For \eqref{est:heat-2}, it holds  from \cite[Proposition 3.7 (ii)]{BDG}.
\end{proof}

\section{Strong convergence rate}\label{sec:proof}
In this part we give the proof for \cref{thm:main}. 

Denote
\begin{align*}
    \varphi_t\coloneqq(X-B^H)_t&=x_0+\int_0^t b(\varphi_s+B^H_s) \mathrm{d}s,\\
    \varphi^n_t\coloneqq(X^n-B^H)_t&=x_0^n+\int_0^t b(\varphi_{k_n(s)}^n+B^H_{k_n(s)}) \mathrm{d} s.
\end{align*}
Fix $S \le s < t \le T$ and $[S,T] \subset [0,1]$. 
    We write
        \begin{equation*}
            \begin{aligned}
                (X&-X^n)_t-(X-X^n)_s\\&=(\varphi-\varphi^n)_t-(\varphi-\varphi^n)_s \\
                &=\int_s^t b(B^H_r+\varphi_r)-b(B^H_r+\varphi_r^n) \mathrm{d} r + \int_s^t b(B^H_r+ \varphi_r^n)-b(B^H_r+ \varphi_{k_n(r)}^n ) \mathrm{d}r \\
                & \quad + \int_s^t b(B^H_r+ \varphi^n_{k_n(r)})-b(B^H_{k_n(r)}+\varphi_{k_n(r)}^n) \mathrm{d} r\\
            &\eqqcolon \mathcal E^{b,n,1}_{s,t}+\mathcal E^{b,n,2}_{s,t}+\mathcal E^{b,n,3}_{s,t}.
        \end{aligned}
    \end{equation*}
   It is clear that in order to show \eqref{est:strong-main}, we need to estimate $\cE^{b,n,1}_{s,t}, \cE^{b,n,2}_{s,t}, \cE^{b,n,3}_{s,t}$ individually. 
We distribute the estimates for each into  \cref{lem:E1}, \cref{lem:E2} and \cref{lem:E3} correspondingly.

Before that 
we first present the following auxiliary lemma for the processes $\varphi$ and $\varphi^n$ defined above which will be heavily used in the later proofs.
\begin{lemma}
    Assume \cref{ass:main1} holds. Then for all $t>s$ and $p\geq 1$ we have a.s.
    \begin{align} \label{eq:phi}
        \| \varphi_t-\E^s \varphi_t \|_{L^p_{\omega}|\mathcal{F}_s} & \le C \|b\|_{\cC^\alpha}|t-s|^{1+\alpha H}; \\ \label{eq:phi-1}
        \| \varphi^n_t -\E^s \varphi_t^n \|_{L^p_\omega| \mathcal{F}_s} & \le C \|b\|_{\cC^\alpha}|t-s|^{1+\alpha H}
    \end{align}
    with some constant $C=C(p,d,\alpha,H)$. 
\end{lemma}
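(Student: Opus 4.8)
The goal is to control the conditional oscillation $\|\varphi_t - \E^s\varphi_t\|_{L^p_\omega|\cF_s}$, and likewise for $\varphi^n$. The plan is to exploit the integral representations $\varphi_t - \varphi_s = \int_s^t b(\varphi_r + B^H_r)\,\mathrm{d}r$ and $\varphi^n_t - \varphi^n_s = \int_s^t b(\varphi^n_{k_n(r)} + B^H_{k_n(r)})\,\mathrm{d}r$, together with the fact that $B^H - \E^s B^H$ is independent of $\cF_s$ with second moment of order $|t-s|^{2H}$ (from \eqref{eq:rep1}). The main point is that after conditioning on $\cF_s$, the increment $\varphi_t - \E^s\varphi_t$ is (morally) a stochastic-sewing-type quantity whose leading contribution comes from the conditional fluctuation of $b(\varphi_r + B^H_r)$, and the smoothing from the Gaussian density $p_{c(H)r^{2H}}$ only enters mildly here — indeed since $H>1$ and $b$ is merely bounded (in $\cC^0$) we can already get the claimed $|t-s|^{1+\alpha H}$ without needing extra derivatives on $b$.

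First I would fix $s<t$, write $\varphi_t - \E^s\varphi_t = \int_s^t \big(b(\varphi_r + B^H_r) - \E^s b(\varphi_r + B^H_r)\big)\,\mathrm{d}r$, and estimate the integrand conditionally on $\cF_s$. Using \eqref{eq:condition} with the comparison point $Y = b(\E^s\varphi_r + \E^s B^H_r)$ (which is $\cF_s$-measurable), we get
\begin{align*}
    \| b(\varphi_r + B^H_r) - \E^s b(\varphi_r + B^H_r) \|_{L^p_\omega|\cF_s} \le 2 \| b(\varphi_r+B^H_r) - b(\E^s\varphi_r + \E^s B^H_r) \|_{L^p_\omega|\cF_s},
\end{align*}
and then the $\cC^\alpha$-seminorm of $b$ bounds the right side by $2[b]_{\cC^\alpha} \| (\varphi_r - \E^s\varphi_r) + (B^H_r - \E^s B^H_r) \|^\alpha_{L^{p\alpha}_\omega|\cF_s}$, where I use that $|x|^\alpha$ is concave and Jensen's inequality. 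By \eqref{eq:rep1} the fBM term contributes $|r-s|^{\alpha H}$, while the $\varphi$ term is handled by a Gronwall/bootstrap argument: denote $g(s,t) := \sup_{p}\esssup\, |t-s|^{-(1+\alpha H)}\|\varphi_t - \E^s\varphi_t\|_{L^p_\omega|\cF_s}$ — or more cleanly, assume a priori a bound $\|\varphi_r - \E^s\varphi_r\|_{L^p_\omega|\cF_s} \le K|r-s|^{1+\alpha H}$ on a short interval and close it. Integrating over $r\in[s,t]$ gives a term $\int_s^t |r-s|^{\alpha H}\,\mathrm{d}r \asymp |t-s|^{1+\alpha H}$ from the noise plus a lower-order term $\int_s^t K^\alpha|r-s|^{\alpha(1+\alpha H)}\,\mathrm{d}r$ from $\varphi$, the latter being of strictly higher order in $|t-s|$ since $\alpha(1+\alpha H) > \alpha H$; hence for $|t-s|$ small one absorbs it and obtains \eqref{eq:phi} on small intervals, then iterates over $[0,1]$ using the tower property of conditional expectation (or a stochastic-sewing argument applied to $A_{s,t} := \int_s^t \E^{s}b(\E^s\varphi_r + B^H_r)\,\mathrm{d}r$, which directly produces the desired Hölder bound with $\varepsilon_1 = \alpha H - 1/2 > 0$ since $H>1$, $\alpha > 1/2$).

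The argument for $\varphi^n$ in \eqref{eq:phi-1} is essentially identical, the only modification being the presence of the discretization points $k_n(r)$: one uses the comparison point $b(\E^s\varphi^n_{k_n(r)} + \E^s B^H_{k_n(r)})$, and the bound \eqref{eq:rep1} is applied with the pair $(s, k_n(r))$ rather than $(s,r)$. Since $k_n(r) \le r$ one has $|k_n(r) - s|^{\alpha H} \le |r-s|^{\alpha H}$ when $k_n(r) \ge s$ (and the cases where $k_n(r) < s$, i.e. $r$ in the first grid cell after $s$, contribute over an interval of length at most $1/n$ and are negligible or handled by a crude bound using boundedness of $b$), so the same integration gives $|t-s|^{1+\alpha H}$. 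I would also need the analogue a priori bound on $\|\varphi^n_{k_n(r)} - \E^s\varphi^n_{k_n(r)}\|$, which follows by the same bootstrap.

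The main obstacle I anticipate is making the bootstrap/Gronwall step fully rigorous with the \emph{conditional} norms, because conditional expectations do not commute freely and one must be careful that the a priori constant $K$ can be chosen uniformly and then removed by iterating over a partition of $[0,1]$ whose mesh depends only on $\|b\|_{\cC^\alpha}, \alpha, H, p, d$; the cleanest route is to route everything through \cref{lem:sewing-2} (applied with $M$ chosen so that $s - M(t-s)$ stays in $[S,T]$), taking $A_{s,t}$ to be the "frozen" integral above, verifying \eqref{sew:A} with exponent $1/2 + (\alpha H - 1/2)$ via \eqref{eq:rep1} and boundedness of $b$, and \eqref{sew:deltaA} via \eqref{eq:heat2} together with \eqref{eq:rep}, after which the sewing bound \eqref{eq:sew:A_bound} combined with \eqref{eq:fregularity}-type comparisons yields \eqref{eq:phi}–\eqref{eq:phi-1} directly. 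I do not expect any genuinely new difficulty beyond bookkeeping, since this lemma is the "warm-up" regularity estimate that the subsequent \cref{lem:E1}, \cref{lem:E2}, \cref{lem:E3} will build upon.
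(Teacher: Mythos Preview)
Your proposal is correct and follows the same core approach as the paper: apply \eqref{eq:condition} with the $\cF_s$-measurable comparison point built from $\E^s\varphi_r$ and $\E^s B^H_r$, use $b\in\cC^\alpha$, invoke \eqref{eq:rep1} for the $B^H$ part, and bootstrap.

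The one refinement worth noting is how the bootstrap is closed. You propose to assume the target bound with constant $K$ on short intervals and buckle, then worry (rightly) about justifying the a priori hypothesis; you also float stochastic sewing as a fallback. The paper avoids both complications by bootstrapping on the \emph{exponent} rather than on the constant: assume \eqref{eq:phi} holds with $|t-s|^m$ in place of $|t-s|^{1+\alpha H}$ for some $m\ge 0$ (trivially true for $m=0$ by boundedness of $b$), and show the same estimate with improved exponent $1+(H\alpha)\wedge(m\alpha)$. Since the sequence $m_0=0$, $m_{i+1}=1+(H\alpha)\wedge(m_i\alpha)$ reaches $1+\alpha H$ in finitely many steps, the induction terminates with the desired bound on all of $[0,1]$, no continuity method or short-interval absorption needed. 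For $\varphi^n$ the paper also handles the grid-point mismatch slightly more cleanly than your ``negligible first cell'' argument: it observes that $\varphi^n_{s'}$ (with $s'=\lceil ns\rceil/n$) is already $\cF_s$-measurable, so for $t<s'$ the left-hand side of \eqref{eq:phi-1} vanishes identically, and for $t\ge s'$ one runs the identical induction starting from $s'$.
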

\begin{proof}
    We only give the proof of~\eqref{eq:phi-1}. 

    For fixed $s,$ define $s'$ to be the smallest grid point which is bigger or equal to $s,$ that is, $s':=\lceil ns \rceil n^{-1}$. It is crucial to note that $\varphi_{s'}^n$ is $\mathcal{F}_s$-measurable.
    Suppose \eqref{eq:phi-1} holds for some $m \ge 0$ in place of $1+\alpha H.$  This is certainly true for $m=0$ thanks to the fact that $b$ is bounded; we proceed now by induction on $m.$ If $s \le t <s',$ then $\varphi_t^n$ is $\mathcal{F}_s$-measurable. Hence $\varphi^n_t=\E^s \varphi_t^n$ and the left-hand side of \eqref{eq:phi-1} is zero. Therefore it remains to consider the case $t \ge s'.$ In this case, using  \eqref{eq:condition} with $X=\varphi_t^n, Y=\varphi^n_{s'}+\int_{s'}^{t} b(\E^s B_{k_n(r)}^H+\E^s \varphi_{k_n(r)}^n ) \mathrm{d} r,$ we deduce
    \begin{align*}
        \| \varphi_t^n-\E^s \varphi_t^n \|_{L^p_\omega|\mathcal{F}_s}
        & \le 2 \left\| \varphi_t^n - \varphi^n_{s'}-\int_{s'}^{t} b (\E^s B^H_{k_n(r)}+ \E^s \varphi_{k_n(r)}^n ) \mathrm{d} r \right\|_{L^p_\omega|\mathcal{F}_s} \\
        & =2 \left\| \int_{s'}^t ( b(B^H_{k_n(r)}+\varphi_{k_n(r)}^n)-b(\E^s B_{k_n(r)}^H+\E^s \varphi_{k_n(r)}^n ) \mathrm{d} r  \right\|_{L^p_\omega|\mathcal{F}_s} \\
        & \le C \|b\|_{\cC^\alpha}\left\| \int_{s'}^t (|B^H_{k_n(r)}-\E^s B^H_{k_n(r)}|^\alpha + | \varphi_{k_n(r)}^n-\E^s \varphi^n_{k_n(r)}|^\alpha )  \mathrm{d} r \right\|_{L^p_\omega|\mathcal{F}_s}.
    \end{align*}
    Using \eqref{eq:EBt} and the induction hypothesis, we get
    \begin{equation*}
        \| \varphi_t - \E \varphi_t \|_{L^p_\omega|\mathcal{F}_s} \le C\|b\|_{\cC^\alpha} |t-s|^{(H\alpha) \wedge (m \alpha)+1}\quad a.s.
    \end{equation*}
    We note that $m_0=0, m_{i+1}=1+(H \alpha) \wedge (m_{i} \alpha)$ reaches $1+H \alpha$ in finitely many steps, therefore we get \eqref{eq:phi}.
\end{proof}

    Here we introduce some notations commonly used in the proofs of ~\cref{lem:E1},~\cref{lem:E2} and~\cref{lem:E3}. For $(s,u,t) \in\overline{[0, 1]}_{1}^{3}$, we set 
        \begin{equation}\label{eq:points}
            s_1\coloneqq s-(t-s), s_2\coloneqq u-(t-u), s_3\coloneqq s-(u-s), s_4\coloneqq s, s_5\coloneqq u, s_6\coloneqq t.
        \end{equation}
    Note by the fact $u \le \frac{2}{3} s+ \frac{1}{3} t$ for $(s,u, t) \in \overline{[0, 1]}_{1}^{3}$, we have $s_2 \le s_3.$

Let us start with the estimate for $\mathcal E^{f,n,1}$.
\begin{lemma}\label{lem:E1}
   Suppose \cref{ass:main1} holds. Then  for any $p\geq 1$ and $\gamma\in(\frac12,1+(\alpha-1)H)$ we have
         \begin{align}\label{est:E1}
            &\left\| \mathcal E^{f,n,1}_{s,t} \right\|_{L^p_\omega} \le C\|f\|_{\cC^\alpha}\| \varphi-\varphi^n \|_{C_p^{\gamma}[s,t]} |t-s|^{\gamma+\varepsilon}, \quad (s,t)\in[0,1]_0^2,f\in \cC^{\alpha}
        \end{align}
    with sufficiently small $\eps>0$ and some constant $C=C(p,d,\alpha,\gamma,H,\eps,\|b\|_{\cC^{\alpha}})$.
\end{lemma}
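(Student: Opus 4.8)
The term to control is
\[
\mathcal E^{b,n,1}_{s,t}=\int_s^t \bigl(b(B^H_r+\varphi_r)-b(B^H_r+\varphi^n_r)\bigr)\,\mathrm d r,
\]
and the target is a bound of order $|t-s|^{1/2+\varepsilon}$ with prefactor $\|\varphi-\varphi^n\|_{C^{1/2}_p}$. The only tool available (no Girsanov, no PDE) is the stochastic sewing lemma, \cref{lem:sewing-2}, applied with $M=1$. So the plan is to define, for $(s,t)\in[0,1]^2_1$,
\[
A_{s,t}:=\int_s^t \mE^{s_1}\Bigl(b(B^H_r+\mE^{s_1}\varphi_r)-b(B^H_r+\mE^{s_1}\varphi^n_r)\Bigr)\,\mathrm d r
= \int_s^t \Bigl(\cP^H_{r-s_1}b(\mE^{s_1}B^H_r+\mE^{s_1}\varphi_r)-\cP^H_{r-s_1}b(\mE^{s_1}B^H_r+\mE^{s_1}\varphi^n_r)\Bigr)\mathrm d r,
\]
where $s_1=s-(t-s)$ and we used \eqref{eq:EBt}. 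Here the frozen exponents $\mE^{s_1}\varphi_r,\mE^{s_1}\varphi^n_r$ are $\cF_{s_1}$-measurable, and $\cA_{s,t}:=\mathcal E^{b,n,1}_{s,t}$ is the candidate for the sewn process. I will verify hypotheses \eqref{sew:A}, \eqref{sew:deltaA}, and the identification bound \eqref{sew:A-A}; then \eqref{eq:sew:A_bound} applied on $[0,1]^2_0$ gives \eqref{est:E1}.

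\textbf{The three checks.} For \eqref{sew:A}: since $\cP^H_{r-s_1}b$ has a Lipschitz bound $\|\cP^H_{r-s_1}b\|_{\cC^1}\lesssim (r-s_1)^{H(\alpha-1)}\|b\|_{\cC^\alpha}$ from \eqref{est:heat-1}, and $r-s_1\ge t-s$ on the relevant range, I estimate $\|A_{s,t}\|_{L^p_\omega}\lesssim \|b\|_{\cC^\alpha}\int_s^t (r-s_1)^{H(\alpha-1)}\|\mE^{s_1}(\varphi_r-\varphi^n_r)\|_{L^p_\omega}\,\mathrm d r$; bounding the conditional expectation by $\|\varphi-\varphi^n\|_{C^0_p}$ and noting $H(\alpha-1)>-\tfrac12$ (because $\alpha>1-\tfrac1{2H}$), the $r$-integral is $\lesssim |t-s|^{1+H(\alpha-1)}=|t-s|^{1/2+\varepsilon_1}$ with $\varepsilon_1=\tfrac12+H(\alpha-1)>0$. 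For \eqref{sew:A-A}: I write $\cA_{s,t}-A_{s,t}=\int_s^t[b(B^H_r+\varphi_r)-b(B^H_r+\varphi^n_r)]-\mE^{s_1}[\cdots(\mE^{s_1}\varphi_r,\mE^{s_1}\varphi^n_r)]\,\mathrm d r$, insert and subtract $\mE^{s_1}$ of the unfrozen integrand, use \eqref{eq:fregularity}/\eqref{eq:condition}-type bounds together with the Hölder continuity of $b$ and the estimates \eqref{eq:phi}, \eqref{eq:phi-1} on $\|\varphi_r-\mE^{s_1}\varphi_r\|,\|\varphi^n_r-\mE^{s_1}\varphi^n_r\|$ (both of order $(r-s_1)^{1+\alpha H}$); since $\alpha H>1$ this already beats $|t-s|^1$ by a positive power, giving \eqref{sew:A-A} with $\varepsilon_3>0$. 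This forces $\cA=\mathcal E^{b,n,1}$ to coincide with the sewn process, so its increments obey \eqref{eq:sew:A_bound}.

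\textbf{The second difference bound, the crux.} The real work is \eqref{sew:deltaA}: for $(s,u,t)\in\overline{[0,1]}^3_1$, compute $\delta A_{s,u,t}=A_{s,t}-A_{s,u}-A_{u,t}$ and bound $\|\mE^{s_1}\delta A_{s,u,t}\|_{L^p_\omega}$ by $C_2|t-s|^{1+\varepsilon_2}$. The term $A_{s,t}-A_{s,u}=\int_u^t(\cdots)$ uses freezing at $s_1$, while $A_{u,t}=\int_u^t(\cdots)$ uses freezing at $s_2=u-(t-u)$; taking $\mE^{s_1}$ (legitimate as $s_1\le s_2$) and using the tower property, the difference of the two frozen integrands is a \emph{second-order} difference of $\cP^H$ in its argument, so the key estimate is the mixed bound \eqref{eq:heat2} applied to $f=b$ with the four points built from $\mE^{s_1}$ vs.\ $\mE^{s_2}$ of $B^H_r$ and of $\varphi_r-\varphi^n_r$. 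Each difference $\mE^{s_1}\xi-\mE^{s_2}\xi$ of a nice process $\xi$ between two conditionings separated by $\lesssim t-u\lesssim t-s$ costs a power of $|t-s|$: for $B^H$ via \eqref{eq:rep1}, for $\varphi-\varphi^n$ via \eqref{eq:phi}/\eqref{eq:phi-1}. Combining the $t^{H(\alpha-2)}$ (resp.\ $t^{H(\alpha-1)}$) singularity from \eqref{eq:heat2} with these gains, and checking that $H(\alpha-2)+\text{(gains)}>-1$ so the $r$-integral converges, should produce $|t-s|^{1+\varepsilon_2}$ with a prefactor $\|\varphi-\varphi^n\|_{C^{1/2}_p}$ — note it is precisely the $C^{1/2}_p$-seminorm, not $C^0_p$, that enters here because the Hölder-$\tfrac12$ increment $\|(\varphi-\varphi^n)_r-(\varphi-\varphi^n)_{s}\|$ appears when comparing the frozen values. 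I expect the bookkeeping of which of the six points $s_1,\dots,s_6$ pairs with which and extracting the sharp exponents (ensuring $\varepsilon_2>0$ and the time-integral is not divergent) to be the main obstacle; everything else is a routine application of \eqref{est:heat-1}, \eqref{eq:heat2}, \eqref{eq:rep1}, \eqref{eq:phi} and \eqref{eq:phi-1}.
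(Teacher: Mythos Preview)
Your approach is exactly the paper's: define the same germ $A_{s,t}$ with freezing at $s_1=s-(t-s)$, verify \eqref{sew:A} via the $\cC^1$-bound \eqref{est:heat-1}, verify \eqref{sew:deltaA} via the second-order estimate \eqref{eq:heat2} combined with \eqref{eq:phi-1} and the $C^{1/2}_p$-increment bound \eqref{eq:fregularity} on $\varphi-\varphi^n$ (this is indeed where the $C^{1/2}_p$-seminorm enters), and identify $\cA=\cE^{b,n,1}$ via \eqref{sew:A-A}. Two small corrections: in the $\delta A$ analysis all four points share the same $\mE^{s_j}B^H_r$, so no $B^H$-difference appears; and for \eqref{sew:A-A} your claim ``$\alpha H>1$'' is false in general (take $H$ close to $1$, $\alpha$ close to $1/2$), but this is harmless since after applying the $\alpha$-H\"older bound and integrating in $r$ the exponent is $1+\alpha(1+\alpha H)\wedge(1+\alpha H)>1$ regardless.
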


\begin{proof} The idea is to apply \cref{lem:sewing-2}. 
   Let  $M=1$, $(s,t)\in[0,1]_1^2$ and  $$A_{s,t}:=\E^{s-(t-s)} \int_s^t f (B^H_r+ \E^{s-(t-s)} \varphi_r)-f(B^H_r+\E^{s-(t-s)} \varphi_r^n) \mathrm{d}r. $$
    We are going to verify \eqref{sew:A} and \eqref{sew:deltaA}. 
    By~\eqref{eq:EBt}, we see
     \begin{align}\label{eq:A_st_heat}
        A_{s,t} & = \int_s^t \mathcal{P}^H_{r-[s-(t-s)]} f (\E^{s-(t-s)} B^H_r + \E^{s-(t-s)} \varphi_r) \notag \\
        & \qquad -\mathcal{P}^H_{r-[s-(t-s)]} f(\E^{s-(t-s)} B^H_r+\E^{s-(t-s)}\varphi^n_r) \mathrm{d} r.
     \end{align}
     Then by ~\eqref{est:heat-1} and~\eqref{eq:CJI}, we get
        \begin{equation}\label{eq:AstLp}
            \begin{aligned}
                \| A_{s,t} \|_{L^p_\omega} &\le C \|f\|_{\cC^\alpha} \int_s^t (r-[s-(t-s)])^{-(1-\alpha)H} \| \E^{s-(t-s)} (\varphi_r-\varphi_r^n) \|_{L^p_\omega} \mathrm{d}r\\
                & \le C \|f\|_{\cC^\alpha} \| \varphi-\varphi^n \|_{C_p^0[s,t]} |t-s|^{1-(1-\alpha)H}.
            \end{aligned}
        \end{equation}
    Then~\eqref{sew:A} holds  with $C_1=C \|f\|_{\cC^\alpha[s,t]} \| \varphi-\varphi^n \|_{C_p^0}$ by the fact that $1-(1-\alpha)H > \frac{1}{2}$.
    
    Next we verify~\eqref{sew:deltaA}. Let $(s,u,t)\in\overline{[0,1]}_1^3$. Recall the definition of $s_i,i=1,\dots,6$ in~\eqref{eq:points}. We first can write
        \begin{equation}
             \begin{aligned}
            &\E^{s-(t-s)} \delta A_{s,u,t}\\ 
            =&\E^{s_1} \E^{s_3} \int_{s_4}^{s_5}  f(B_r^H+ \E^{s_1} \varphi_r)- f( B_r^H + \E^{s_1} \varphi_r^n)-f(B^H_r + \E^{s_3} \varphi_r) +f( B^H_r + \E^{s_3} \varphi_r^n) \mathrm{d} r \\
            &+ \E^{s_1} \E^{s_2} \int_{s_5}^{s_6} f( B_r^H+\E^{s_1} \varphi_r) - f( B_r^H + \E^{s_1} \varphi_r^n)- f( B_r^H+\E^{s_2} \varphi_r)+f( B_r^H+\E^{s_2} \varphi_r^n) \mathrm{d} r \\
            =&\E^{s_1} \int_{s_4}^{s_5} \mathcal{P}^H_{r-s_3} f(\E^{s_3} B_r^H+ \E^{s_1} \varphi_r)-\mathcal{P}^H_{r-s_3} f(\E^{s_3} B_r^H+\E^{s_1} \varphi_r^n) \\
            &\qquad\qquad- \mathcal{P}^H_{r-s_3} f(\E^{s_3}B^H_r + \E^{s_3} \varphi_r)+\mathcal{P}^H_{r-s_3} f(\E^{s_3} B^H_r + \E^{s_3} \varphi_r^n)\mathrm{d} r \\
            &  + \E^{s_1} \int_{s_5}^{s_6} \mathcal{P}_{r-s_2}^H f(\E^{s_2} B_r^H+\E^{s_1} \varphi_r) - \mathcal{P}^H_{r-s_2} f(\E^{s_2} B_r^H + \E^{s_1} \varphi_r^n)\\
            &\qquad\qquad- \mathcal{P}^H_{r-s_2} f(\E^{s_2} B_r^H+\E^{s_2} \varphi_r)+ \mathcal{P}^H_{r-s_3}f(\E^{s_2} B_r^H+\E^{s_2} \varphi_r^n)\mathrm{d} r \\
            =: & I_1+I_2.\label{def:deltaA}
        \end{aligned}
    \end{equation}
    The two terms are treated in  the exactly same way, so we only detail $I_1$.
    By~\eqref{eq:CJI} and applying~\eqref{eq:heat2} with 
    \begin{align*}
       x_1=\E^{s_3} B_r^H+\E^{s_1} \varphi_r^n, x_2=\E^{s_3} B_r^H+ \E^{s_1} \varphi_r, x_3=\E^{s_3} B^H_r + \E^{s_3} \varphi_r^n, x_4=\E^{s_3}B^H_r + \E^{s_3} \varphi_r, 
    \end{align*} we obtain
         \begin{equation}\label{eq:I_1_bound}
            \begin{aligned}
                \| I_1 \|_{L^p_\omega} & \le C \|f\|_{\cC^\alpha} \int_{s_4}^{s_5}  |r-s_3|^{-(1-\alpha)H}   \big\| \E^{s_1} | \E^{s_3} \varphi_r -\E^{s_3} \varphi_r^n-\E^{s_1} \varphi_r+\E^{s_1} \varphi_r^n | \big\|_{L^p_\omega} \\
                & \quad \qquad\qquad\quad+  |r-s_3|^{-(2-\alpha)H} \big\| |\E^{s_1} \varphi_r-\E^{s_1} \varphi_r^n | \cdot \E^{s_1} | \E^{s_3} \varphi_r^n-\E^{s_1} \varphi_r^n | \big\|_{L^p_\omega} \mathrm{d}r.
            \end{aligned}
         \end{equation}
   By~\eqref{eq:phi-1},
    \begin{equation}\label{eq:phi_n_reg}
        \E^{s_1} | \E^{s_3} \varphi_r^n -\E^{s_1} \varphi_r^n | 
        = \E^{s_1} | \E^{s_3} ( \varphi^n_r-\E^{s_1} \varphi_r^n)|  \le \E^{s_1} | \varphi^n_r-\E^{s_1} \varphi_r^n| \le C |r-s_1|^{1+\alpha H}.
     \end{equation}
   Besides from ~\eqref{eq:CJI}, we get
    \begin{equation}\label{eq:E1E3=E1}
        \begin{aligned}
            \big\|\E^{s_1} |\E^{s_3} \varphi_r-\E^{s_3} \varphi_r^n-\E^{s_1} \varphi_r+\E^{s_1} \varphi_r^n | \big\|_{L^p_\omega}
            & =\big\|\E^{s_1} | \E^{s_3} ((\varphi_r-\varphi_r^n)-\E^{s_1}(\varphi_r-\varphi_r^n))|\big\|_{L^p_\omega} \\
            & \le \big\|\E^{s_1} |(\varphi_r-\varphi_r^n)-\E^{s_1}(\varphi_r-\varphi_r^n)|\big\|_{L^p_\omega}\\
            & \le \| (\varphi_r-\varphi_r^n)-\E^{s_1} (\varphi_r-\varphi_r^n) \|_{L^p_\omega},
        \end{aligned}
    \end{equation}     
    meanwhile ~\eqref{eq:fregularity} implies
         \begin{equation}\label{eq:phi-phi_n_reg}
            \big\|\E^{s_1} |\E^{s_3} \varphi_r-\E^{s_3} \varphi_r^n-\E^{s_1} \varphi_r+\E^{s_1} \varphi_r^n | \big\|_{L^p_\omega} \le C |r-s_1|^{\gamma} [\varphi-\varphi^n]_{C^{\gamma}_p[s,t]},
        \end{equation}
 clearly from ~\eqref{eq:CJI}, 
       \begin{equation}\label{eq:phi-phi_n_bound}
        \| \E^{s_1} (\varphi_r-\varphi_r^n ) \|_{L^p_\omega} \le  \| \varphi-\varphi^n \|_{C_p^0[s,t]},
    \end{equation}
   now plugging ~\eqref{eq:phi_n_reg},~\eqref{eq:phi-phi_n_reg} and~\eqref{eq:phi-phi_n_bound} into~\eqref{eq:I_1_bound}, we have
        \begin{equation}\label{est:I1}
            \begin{aligned}
                \| I_1 \|_{L^p_\omega} & \le C \|f \|_{\cC^\alpha} [\varphi-\varphi^n]_{C_p^{\gamma}[s,t]} \int_{s_4}^{s_5} (r-s_1)^{\gamma} (r-s_3)^{-(1-\alpha)H} \mathrm{d} r \\
                & \quad + \|f\|_{\cC^\alpha} \| \varphi-\varphi^n \|_{C_p^0[s,t]} \int_{s_4}^{s_5} (r-s_1)^{1+\alpha H} (r-s_3)^{-(2-\alpha)H} \mathrm{d} r  \\
                & \le C \|f\|_{\cC^\alpha} (t-s)^{1+\gamma-(1-\alpha)H} [\varphi-\varphi^n]_{C_p^{\gamma}[s,t]}  \\
                & \quad + \|b \|_{\cC^\alpha} (t-s)^{2+\alpha H-(2-\alpha)H} \| \varphi-\varphi^n \|_{C_p^0[s,t]}\\
                & \le C \|f\|_{\cC^\alpha}  \| \varphi-\varphi^n \|_{C_p^{\gamma}[s,t]} (t-s)^{(1+\gamma-(1-\alpha)H)\wedge(2+\alpha H-(2-\alpha)H)}.
            \end{aligned}
        \end{equation}
   
 The above analysis also implies the same bound  on $I_2$ observing $I_1$ and $I_2$ share the same structure.   
  
  Noticing  ~\cref{ass:main1} implies $(1+\gamma-(1-\alpha)H)\wedge(2+\alpha H-(2-\alpha)H)>1$, we conclude ~\eqref{sew:deltaA} holds with $C_2= C \|f\|_{\cC^\alpha}  \| \varphi-\varphi^n \|_{C_p^{\gamma}[s,t]}$.
    
   Now we claim that the process $\mathcal{A}$ in~\eqref{eq:sew:A_bound} actually is given by 
        \begin{equation}
            {\mathcal{A}}_t= \int_0^t f(B^H_r+\varphi_r)-f(B^H_r+\varphi_r^n) \mathrm{d} r.
        \end{equation}
    To prove this, it suffices to show~\eqref{sew:A-A}. 
    By~\eqref{eq:A_st_heat}, we write
        \begin{equation}\label{con:uniq}
            \begin{aligned}
                {\mathcal{A}}_t-{\mathcal{A}}_s-A_{s,t} &=\int_s^t f(B^H_r+\varphi_r)-\mathcal{P}^H_{r-s_1} f(\E^{s_1} B_r^H + \E^{s_1} \varphi_r) \mathrm{d}r  \\
                &\quad  -\int_s^t f(B_r^H+ \varphi_r^n)-\mathcal{P}^H_{r-s_1} f(\E^{s_1} B_r^H + \E^{s_1} \varphi_r^n) \mathrm{d}r \\
                & =:II_1+II_2.
            \end{aligned} 
        \end{equation}
Again we can see that    $II_2$ can be treated similarly to $II_1$, so we only detail $II_1$. We can see
     \begin{align}\label{est:pt-12}
        II_1&=\int_s^t (f-\mathcal{P}^H_{r-s_1} f)(B_r^H+\varphi_r) \mathrm{d}r + \int_s^t \mathcal{P}^H_{r-s_1} f(B_r^H+\varphi_r)-\mathcal{P}^H_{r-s_1} f(\E^{s_1} B_r^H+\E^{s_1} \varphi_r) \mathrm{d}r.
     \end{align}

   Using~\eqref{est:heat-2} with $\delta=\frac{\alpha}{2}$, $\beta=0$ and~\eqref{est:heat-1} with $\beta=\alpha$, we get 
     \begin{align}
        \| II_1 \|_{L^p_\omega} 
        &\le C \int_s^t \| \mathcal{P}^H_{r-s_1}f-f \|_{\cC^0} + \| \mathcal{P}^H_{r-s_1} f \|_{\cC^\alpha} ( \|B_r^H-\E^{s_1} B^H_r \|^{\alpha}_{L^{\alpha p}_{\omega}}+ \| \varphi_r-\E^{s_1} \varphi_r\|^{\alpha}_{L_{\omega}^{\alpha p}}) \mathrm{d} r \nonumber\\
        & \le C \|f\|_{\cC^\alpha} \int_s^t (r-s_1)^{\alpha H}  + (|r-s_1|^{\alpha H}+|r-s_1|^{\alpha(1+\alpha H)}) \mathrm{d} r \nonumber\\
        & \le  C \|f \|_{\cC^\alpha} (t-s)^{(1+\alpha H) \wedge (1+\alpha (1+\alpha H))},\label{con:uni-1}
     \end{align}
     where in the second inequality we used~\eqref{eq:rep} and~\eqref{eq:phi}. The same bound on $II_2$.

    Therefore, ~\eqref{sew:A-A} holds since $(1+\alpha H) \wedge (1+\alpha (1+\alpha H))>1$. Then the uniqueness from \cref{lem:sewing-2} verifies the claim.
   
     Finally, by \cref{lem:sewing-2}, the proof completes.
\end{proof}

Let us move to estimate $\mathcal E^{f,n,2}$ term.
\begin{lemma}\label{lem:E2}
   Suppose \cref{ass:main1} holds. Then we have for any $p\geq 1$, $\gamma\in(\frac12,1+(\alpha-1)H)$ and $(s,t)\in[0,1]_0^2$  
        \begin{equation}\label{est:E2}
            \left\| \mathcal E^{f,n,2}_{s,t} \right\|_{L^p_\omega} \le \frac{C}{n} \|f\|_{\cC^\alpha}|t-s|^{\gamma+ \varepsilon},\quad (s,t)\in[0,1]_0^2,f\in \cC^{\alpha}
        \end{equation}
    where $\epsilon>0$ is sufficiently small and  $C=C(p,d,\alpha,\gamma,H,\eps,\|b\|_{\cC^{\alpha}})$.
\end{lemma}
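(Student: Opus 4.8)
The proof runs closely parallel to that of \cref{lem:E1}: we apply \cref{lem:sewing-2} with $M=1$ to the germ
\[
A_{s,t}:=\E^{s-(t-s)}\int_s^t b\big(B^H_r+\E^{s-(t-s)}\varphi^n_r\big)-b\big(B^H_r+\E^{s-(t-s)}\varphi^n_{k_n(r)}\big)\,\mathrm{d} r,\qquad (s,t)\in[0,1]_1^2,
\]
and afterwards identify the resulting sewn process with $t\mapsto\int_0^t b(B^H_r+\varphi^n_r)-b(B^H_r+\varphi^n_{k_n(r)})\,\mathrm{d} r$ via the uniqueness clause \eqref{sew:A-A}. The quantity playing here the role of $\varphi-\varphi^n$ in \cref{lem:E1} is the increment $\varphi^n_r-\varphi^n_{k_n(r)}$, which is completely explicit: since $k_n$ is constant on each grid cell, $\varphi^n_r-\varphi^n_{k_n(r)}=(r-k_n(r))\,b(\varphi^n_{k_n(r)}+B^H_{k_n(r)})$, so that $\|\varphi^n_r-\varphi^n_{k_n(r)}\|_{L^p_\omega}\le n^{-1}\|b\|_{\cC^\alpha}$; this is the source of the global factor $n^{-1}$.

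Rewriting $A_{s,t}$ via \eqref{eq:EBt} as an integral of differences of $\mathcal P^H_{r-s_1}b$, with $s_1:=s-(t-s)$, evaluated at $\E^{s_1}B^H_r+\E^{s_1}\varphi^n_r$ and $\E^{s_1}B^H_r+\E^{s_1}\varphi^n_{k_n(r)}$, and invoking \eqref{est:heat-1} with $\beta=1$ together with \eqref{eq:CJI} and the displacement bound above, one gets, exactly as in \eqref{eq:AstLp}, $\|A_{s,t}\|_{L^p_\omega}\le Nn^{-1}|t-s|^{1-(1-\alpha)H}$, so \eqref{sew:A} holds with $C_1=Nn^{-1}$ and $\varepsilon_1=\tfrac12-(1-\alpha)H>0$ by \cref{ass:main1}. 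For \eqref{sew:deltaA} we reproduce the decomposition \eqref{def:deltaA}, $\E^{s-(t-s)}\delta A_{s,u,t}=I_1+I_2$, and treat $I_1$ (the term $I_2$ being structurally identical) by applying \eqref{eq:heat2} with $x_1=\E^{s_3}B^H_r+\E^{s_1}\varphi^n_{k_n(r)}$, $x_2=\E^{s_3}B^H_r+\E^{s_1}\varphi^n_r$, $x_3=\E^{s_3}B^H_r+\E^{s_3}\varphi^n_{k_n(r)}$, $x_4=\E^{s_3}B^H_r+\E^{s_3}\varphi^n_r$. Here $x_1-x_2=-\E^{s_1}(\varphi^n_r-\varphi^n_{k_n(r)})$ has modulus $\le n^{-1}\|b\|_{\cC^\alpha}$, one has $\|x_1-x_3\|_{L^p_\omega}\le N|r-s_1|^{1+\alpha H}$ by \eqref{eq:phi-1} and \eqref{eq:CJI}, and the fourth difference is $x_1-x_2-x_3+x_4=\E^{s_3}\big((\varphi^n_r-\varphi^n_{k_n(r)})-\E^{s_1}(\varphi^n_r-\varphi^n_{k_n(r)})\big)$.

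The only genuinely new — and main — point is controlling this last term: the crude bound $\|(\varphi^n_r-\varphi^n_{k_n(r)})-\E^{s_1}(\cdot)\|_{L^p_\omega}\le 2n^{-1}\|b\|_{\cC^\alpha}$, once inserted into \eqref{eq:heat2}, produces only exponent $1-(1-\alpha)H<1$ in $|t-s|$, which is not summable for \eqref{sew:deltaA}. To recover the missing power one proves the sharper bound
\[
\big\|(\varphi^n_r-\varphi^n_{k_n(r)})-\E^{s}(\varphi^n_r-\varphi^n_{k_n(r)})\big\|_{L^p_\omega}\le Nn^{-1}|r-s|^{\alpha H},\qquad N=N(p,d,\alpha,H,\|b\|_{\cC^\alpha}),
\]
which, using $\varphi^n_r-\varphi^n_{k_n(r)}=(r-k_n(r))\,b(\varphi^n_{k_n(r)}+B^H_{k_n(r)})$, reduces (the left side vanishing when $k_n(r)\le s$) to the estimate $\|b(\varphi^n_{k_n(r)}+B^H_{k_n(r)})-\E^{s}b(\varphi^n_{k_n(r)}+B^H_{k_n(r)})\|_{L^p_\omega}\le N|k_n(r)-s|^{\alpha H}$, itself obtained from \eqref{eq:condition} (freezing at the $\mathcal F_s$-measurable point $\E^{s}B^H_{k_n(r)}+\E^{s}\varphi^n_{k_n(r)}$), the bound $b\in\cC^\alpha$, subadditivity of $x\mapsto x^\alpha$ on $[0,\infty)$, and \eqref{eq:rep1} and \eqref{eq:phi-1}. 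Feeding this into \eqref{eq:heat2} in place of \eqref{eq:phi-phi_n_reg}, the two integrals arising are handled exactly as in \eqref{est:I1} and contribute $\le Nn^{-1}|t-s|^{2+\alpha H-(2-\alpha)H}$ and $\le Nn^{-1}|t-s|^{1+(2\alpha-1)H}$; both exponents exceed $1$ since \cref{ass:main1} forces $(1-\alpha)H<\tfrac12$, hence also $\alpha>\tfrac12$, so \eqref{sew:deltaA} holds with $C_2=Nn^{-1}$. The identification of the sewn process proceeds verbatim as in \eqref{con:uniq}–\eqref{con:uni-1}: one writes $\mathcal A_t-\mathcal A_s-A_{s,t}=II_1+II_2$ as in \eqref{con:uniq}, splits each $II_j$ as in \eqref{est:pt-12}, and bounds the pieces by \eqref{est:heat-2} with $\delta=\tfrac\alpha2,\beta=0$, \eqref{est:heat-1} with $\beta=\alpha$, \eqref{eq:rep} and \eqref{eq:phi-1}, obtaining exponent $(1+\alpha H)\wedge(1+\alpha(1+\alpha H))>1$ and hence \eqref{sew:A-A}. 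Then \cref{lem:sewing-2} and \eqref{eq:sew:A_bound} yield $\|\mathcal E^{b,n,2}_{s,t}\|_{L^p_\omega}\le K_1C_1|t-s|^{1/2+\varepsilon_1}+K_2C_2|t-s|^{1+\varepsilon_2}\le Nn^{-1}|t-s|^{1/2+\varepsilon}$ for any $0<\varepsilon\le\varepsilon_1$, which is \eqref{est:E2}.
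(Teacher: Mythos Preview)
Your proof is correct and follows essentially the same approach as the paper: same germ $A_{s,t}$, same use of \eqref{eq:heat2} for $\delta A$, and the same identification of the sewn process. The only (harmless) difference is in the key estimate for the fourth difference: the paper freezes $b$ at $\E^{s_1}B^H_{k_n(r)}+\varphi^n_{s_1}$ and obtains the bound $Nn^{-1}|r-s_1|^{\alpha}$ (using Lipschitzness of $\varphi^n$), while you freeze at $\E^{s_1}B^H_{k_n(r)}+\E^{s_1}\varphi^n_{k_n(r)}$ and get the sharper $Nn^{-1}|r-s_1|^{\alpha H}$; accordingly your final exponent $1+(2\alpha-1)H$ is slightly better than the paper's $1+\alpha-(1-\alpha)H$, though both exceed $1$ under \cref{ass:main1}.
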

    
\begin{proof}
    Again the idea is to apply \cref{lem:sewing-2}.    Let  $M=1$, $(s,t)\in[0,1]_1^2$ and \begin{equation}\label{eq:A_st2}
        A_{s,t}:=\E^{s-(t-s)} \int_s^t f(B_r^H+ \E^{s-(t-s)} \varphi_r^n)-f(B_r^H+\E^{s-(t-s)} \varphi_{k_n(r)}^n) \mathrm{d}r.
    \end{equation}
We may notice that the analysis of showing \eqref{eq:AstLp} and estimating \eqref{def:deltaA} also work for \eqref{eq:A_st2}. Therefore we omit the detailed proof here for showing the following estimates and present in \cref{app:E2};
in the end we get 
    \begin{align}
        \label{est:E2-app-1}
     \| A_{s,t} \|_{L^p_\omega}&\le C \|f\|_{\cC^\alpha} \sup_{r \in [s,t]} \| \varphi_r^n-\varphi_{k_n(r)}^n \|_{L_\omega^p} |t-s|^{1-(1-\alpha)H}
           \nonumber\\& \le C n^{-1} \|f\|_{\cC^\alpha} |t-s|^{1-(1-\alpha)H},\\
     \|  \E^{s_1} \delta A_{s,u,t}\|_{L^p_\omega}&\le C \frac{\|f\|_{\cC^\alpha}}{n} (t-s)^{(1+\alpha-(1-\alpha)H) \wedge (2+(2 \alpha -2)H)},
     \label{est:E2-app-2}
       \end{align}
 recall the definition of $s_i,i=1,\dots,6$ in~\eqref{eq:points}.

  Then, with taking $C_1=C\|f\|_{\cC^\alpha}n^{-1}$, \eqref{sew:A} holds by the fact that $1-(1-\alpha)H > \frac{1}{2}.$   ~\cref{ass:main1} implying  $(1+\alpha-(1-\alpha)H) \wedge (2+(2 \alpha -2)H)>1$, ~\eqref{sew:deltaA} holds with $C_2= \frac{C}{n} \|f\|_{\cC^\alpha}$

   Similarly to~\cref{lem:E1}, we could verify that 
      the process $\mathcal{A}$ in~\eqref{eq:sew:A_bound} is indeed given by  
        \begin{equation}
            \mathcal{A}_t= \int_0^t f(B^H_r+\varphi_r^n)-f(B^H_r+\varphi_{k_n(r)}^n) \mathrm{d} r.
        \end{equation}
    
    Therefore, the claim \eqref{est:E2} holds by the uniqueness from \cref{lem:sewing-2}. 
\end{proof}
Now it is the analysis for the last term--$\mathcal E^{f,n,3}$.
\begin{lemma}\label{lem:E3}
     Suppose \cref{ass:main1} holds. Then for any $p\geq 1$, $\gamma\in(\frac12,1+(\alpha-1)H)$ and $(s,t)\in[0,1]_0^2$, we have 
        \begin{equation}\label{est:E3}
            \left\| \mathcal E^{f,n,3}_{s,t}\right\|_{L^p_\omega} \le \frac{C}{n} \|f\|_{\cC^\alpha}|t-s|^{\gamma+ \varepsilon},\quad (s,t)\in[0,1]_0^2,f\in \cC^{\alpha}
        \end{equation}
    with sufficiently small $\eps>0$ and some constant $C=C(p,d,\alpha,\gamma,H,\eps,\|b\|_{\cC^\alpha})$.
\end{lemma}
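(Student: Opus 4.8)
The plan is to replay the stochastic sewing argument of \cref{lem:E1} and \cref{lem:E2}, the only genuinely new feature being that the discretisation error of $\mathcal E^{b,n,3}_{s,t}$ now sits in the \emph{noise} slot: its integrand is $b(B^H_r+\varphi^n_{k_n(r)})-b(B^H_{k_n(r)}+\varphi^n_{k_n(r)})$, so the quantity that is small in $n$ is the fBM increment $B^H_r-B^H_{k_n(r)}$ rather than an increment of $\varphi^n$. The key observation is that, since $H>1$, \eqref{eq:rep} gives $\mE|B^H_r-B^H_{k_n(r)}|\le N|r-k_n(r)|^{H\wedge1}\le Nn^{-1}$, and, $B^H$ being Gaussian, also $\|B^H_r-B^H_{k_n(r)}\|_{L^p_\omega}\le Nn^{-1}$; moreover, after conditioning this increment will enter the sewing functional \emph{linearly}, through $\nabla\mathcal P^H_\tau b$, and not through the H\"older seminorm $[b]_{\cC^\alpha}$. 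This is exactly what upgrades the crude rate $n^{-\alpha}$ to $n^{-1}$.

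Concretely, I would apply \cref{lem:sewing-2} with $M=1$ to the $\mathcal F_t$-measurable family
\[
 A_{s,t}:=\mE^{s-(t-s)}\int_s^t \Big(b\big(B^H_r+\mE^{s-(t-s)}\varphi^n_{k_n(r)}\big)-b\big(B^H_{k_n(r)}+\mE^{s-(t-s)}\varphi^n_{k_n(r)}\big)\Big)\,\mathrm{d} r .
\]
Writing $s_1:=s-(t-s)$ and using \eqref{eq:EBt}, this becomes $\int_s^t\big(\mathcal P^H_{r-s_1}b(\mE^{s_1}B^H_r+\mE^{s_1}\varphi^n_{k_n(r)})-\mathcal P^H_{k_n(r)-s_1}b(\mE^{s_1}B^H_{k_n(r)}+\mE^{s_1}\varphi^n_{k_n(r)})\big)\,\mathrm{d} r$ on the part where $k_n(r)\ge s_1$ (so $r-s_1\asymp k_n(r)-s_1\asymp t-s$), the complementary regime $t-s\lesssim n^{-1}$ being dispatched directly from $[b]_{\cC^\alpha}$, \eqref{eq:rep} and $\alpha>\tfrac12$. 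To verify \eqref{sew:A} I would add and subtract $\mathcal P^H_{r-s_1}b(\mE^{s_1}B^H_{k_n(r)}+\mE^{s_1}\varphi^n_{k_n(r)})$: the spatial shift $\mE^{s_1}(B^H_r-B^H_{k_n(r)})$ is controlled by \eqref{est:heat-1} with $\beta=1$ together with \eqref{eq:CJI} and \eqref{eq:rep}, while the mismatch of semigroup times from $r-s_1$ to $k_n(r)-s_1$ is controlled by \eqref{est:heat-2} with $\beta=0$, $\delta=1$, using $|r-k_n(r)|\le n^{-1}$. Integrating in $r$ yields $\|A_{s,t}\|_{L^p_\omega}\le Nn^{-1}\|b\|_{\cC^\alpha}|t-s|^{(1-(1-\alpha)H)\wedge(H\alpha)}$, so \eqref{sew:A} holds with $C_1=Nn^{-1}\|b\|_{\cC^\alpha}$, since $1-(1-\alpha)H>\tfrac12$ by \cref{ass:main1} and $H\alpha>H-\tfrac12>\tfrac12$.

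For \eqref{sew:deltaA} I would expand $\mE^{s_1}\delta A_{s,u,t}$, by add-and-subtract manipulations at the points $s_1,\dots,s_6$ of \eqref{eq:points}, into finitely many terms of two kinds: second-order differences of the smooth function $\mathcal P^H_{r-s_j}b$ in its spatial argument, estimated by \eqref{eq:heat2}; and differences $(\mathcal P^H_{r-s_j}-\mathcal P^H_{k_n(r)-s_j})b$ from the mismatch of semigroup times, estimated by \eqref{est:heat-2}. The spatial increments entering \eqref{eq:heat2} are of two types: increments of $\mE^{s_j}\varphi^n_{k_n(r)}$ across conditioning times, bounded by $N|r-s_1|^{1+\alpha H}$ via \eqref{eq:phi-1} and \eqref{eq:CJI} exactly as in \eqref{eq:phi_n_reg}--\eqref{eq:E1E3=E1}, and increments $\mE^{s_j}(B^H_r-B^H_{k_n(r)})$, from which one extracts the factor $\lesssim n^{-1}$ via \eqref{eq:rep} and the remaining power of $|t-s|$ via \eqref{eq:rep1} and \eqref{eq:CJI}. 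Collecting exponents and using $H>1$ with \cref{ass:main1}, the overall power of $|t-s|$ exceeds $1$, so \eqref{sew:deltaA} holds with $C_2=Nn^{-1}\|b\|_{\cC^\alpha}$. It remains to identify the limit: the claim $\mathcal A_t=\int_0^t\big(b(B^H_r+\varphi^n_{k_n(r)})-b(B^H_{k_n(r)}+\varphi^n_{k_n(r)})\big)\,\mathrm{d} r=\mathcal E^{b,n,3}_{0,t}$ follows from \eqref{sew:A-A} by adding and subtracting $\mathcal P^H_{\cdot-s_1}b$ inside the integral and using \eqref{est:heat-2} (semigroup versus identity, $\delta=\tfrac\alpha2$) together with \eqref{est:heat-1}, \eqref{eq:rep1}, \eqref{eq:phi-1} to freeze the argument, the relevant exponents again exceeding $1$, precisely as for $II_1$ in \cref{lem:E1}. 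Then \cref{lem:sewing-2} and \eqref{eq:sew:A_bound} restricted to $(s,t)\in[0,1]_0^2$ give \eqref{est:E3}.

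The step I expect to be the main obstacle is the bookkeeping in \eqref{sew:deltaA}: one must track simultaneously the $n^{-1}$ carried by the noise increment, the several time scales $r-s_i$ from \eqref{eq:points}, and the frozen argument $\varphi^n_{k_n(r)}$ appearing in both evaluations of $b$ together with the \emph{two} distinct semigroup times $r-s_j$ and $k_n(r)-s_j$ (unlike \cref{lem:E1} and \cref{lem:E2}, where only one time occurs). A subsidiary technical nuisance is the legitimacy of the representation via \eqref{eq:EBt} when $k_n(r)<s_1$, which is why the regime $t-s\lesssim n^{-1}$ should be split off and handled by the elementary $\cC^\alpha$ estimate. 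Everything else is a routine transcription of \cref{lem:E1} and \cref{lem:E2}.
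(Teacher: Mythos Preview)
Your proposal is correct and follows essentially the same route as the paper: the same sewing germ $A_{s,t}$, the same split $|t-s|\le Cn^{-1}$ versus $|t-s|>Cn^{-1}$ for \eqref{sew:A} (with the same spatial/temporal decomposition $IV_1+IV_2$ via \eqref{est:heat-1} and \eqref{est:heat-2}), and the same identification of $\mathcal A$. The only notable difference is in \eqref{sew:deltaA}: the paper writes the four-point expression $V_1$ with a \emph{single} semigroup time $r-s_3$ and exploits the exact cancellation $x_1-x_2-x_3+x_4=0$ in \eqref{eq:heat2}, so that only the product term $(r-s_3)^{-(2-\alpha)H}|\mE^{s_3}(B^H_r-B^H_{k_n(r)})|\cdot|\mE^{s_1}\varphi^n_{k_n(r)}-\mE^{s_3}\varphi^n_{k_n(r)}|$ survives, bounded via Cauchy--Schwarz, \eqref{eq:rep} and \eqref{eq:phi-1}; your plan with separate ``spatial four-point'' and ``semigroup-time-mismatch'' pieces is a slightly heavier but equally valid bookkeeping that leads to the same exponents.
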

\begin{proof}
    In order to apply \cref{lem:sewing-2}, this time  we set $$A_{s,t}=\E^{s-(t-s)} \int_s^t f (B_r^H+ \E^{s-(t-s)} \varphi_{k_n(r)}^n)-f(B_{k_n(r)}^H+\E^{s-(t-s)} \varphi_{k_n(r)}^n) \mathrm{d}r.$$ 
    When $|t-s| \le \frac{10}{n}$, by~\eqref{eq:rep}, we have for any $\epsilon>0$
        \begin{align*}
            \|A_{s,t} \|_{L^p_\omega} 
            & \le C \|f\|_{\cC^\alpha} \int_s^t \big\| |B^H_r-B^H_{k_n(r)} |^\alpha \big\|_{L^p_\omega} \mathrm{d} r \le C\|f\|_{\cC^\alpha}  |t-s| \cdot \frac{1}{n^\alpha}\\& \le C \|f\|_{\cC^\alpha} |t-s|^{\gamma+\varepsilon} \frac{1}{n^{\alpha + 1-\gamma-\varepsilon}}. 
        \end{align*}
    Since $\alpha>\gamma$ we can take such sufficiently small $\eps>0$ so that $\epsilon\in(0,\alpha-\gamma)$, which implies
    \begin{equation}\label{eq:A_st:t-ssmall}
        \|A_{s,t} \|_{L^p_\omega}  \le \frac{C}{n} \|f\|_{\cC^\alpha}  |t-s|^{\frac{1}{2}+\varepsilon}. 
    \end{equation}
    When $|t-s| > \frac{10}{n}$, we have
       \begin{align*}
           A_{s,t} & =\int_{s_4}^{s_6} \mathcal{P}^H_{r-s_1} f(\E^{s_1} B_r^H+ \E^{s_1} \varphi_{k_n(r)}^n) -\mathcal{P}^H_{k_n(r)-s_1} f(\E^{s_1} B_{k_n(r)}^H+ \E^{s_1} \varphi_{k_n(r)}^n) \mathrm{d}r \\
           &=\int_{s_4}^{s_6} \mathcal{P}^H_{r-s_1} f(\E^{s_1} B_r^H+\E^{s_1} \varphi_{k_n(r)}^n)-\mathcal{P}^H_{r-s_1} f(\E^{s_1} B_{k_n(r)}^H + \E^{s_1} \varphi_{k_n(r)}^n) \\
           & \qquad +\mathcal{P}^H_{r-s_1} f(\E^{s_1} B_{k_n(r)}^H+ \E^{s_1} \varphi_{k_n(r)}^n)-\mathcal{P}^H_{k_n(r)-s_1} f(\E^{s_1} B^H_{k_n(r)}+\E^{s_1} \varphi_{k_n(r)}^n) \mathrm{d}r \\
           &=:IV_1+IV_2.
       \end{align*}
       For $IV_1$, by \eqref{est:heat-1}, \eqref{eq:CJI} and \eqref{eq:rep}, we have
       \begin{align}
           \| IV_1 \|_{L^p_\omega} &\le \int_{s_4}^{s_6} \| \mathcal{P}^H_{r-s_1} f \|_{\cC^1} \| \E^{s_1} (B_r^H-B_{k_n(r)}^H)\|_{L^p_\omega} \mathrm{d}r
            \le \frac{C\|f\|_{\cC^\alpha}}{n}  \int_{s_4}^{s_6} (r-s_1)^{-(1-\alpha)H}  \mathrm{d}r \nonumber\\
           & \le \frac{C}{n} \|f\|_{\cC^\alpha} |t-s|^{1-(1-\alpha)H}.\label{est:I1-3}
       \end{align}
       For $ IV_2$,  \eqref{est:heat-2} with $\delta=1$ gives us
    $$\| \mathcal{P}^H_t f-\mathcal{P}_s^H f\|_{\cC^0} \le C s^{-(2-\alpha)H} |t^{2H}-s^{2H}| \|f\|_{\cC^\alpha},$$
  it implies 
       \begin{align*}
           |IV_2| & \le C \|f \|_{\cC^\alpha} \int_{s_4}^{s_6} (k_n(r)-s_1)^{-(2-\alpha)H} \left( (r-s_1)^{2H} -(k_n(r)-s_1)^{2H} \right) \mathrm{d} r .
       \end{align*}
    Moreover, by $k_n(r)-s_1 \asymp t-s,$ and $$\big| |r-s_1|^{2H}-|k_n(r)-s_1|^{2H}\big| \le C |r-k_n(r)| |r-s_1|^{2H-1} \le \frac{C}{n}  |t-s|^{2H-1},$$ we have
       \begin{equation}
           |IV_2| \le  \frac{C}{n} \|f\|_{\cC^\alpha} \int_{s_4}^{s_6} (t-s)^{2H-1-(2-\alpha)H} \mathrm{d} r  =\frac{C}{n} \|f\|_{\cC^\alpha}  (t-s)^{\alpha H}.\label{est:I2-3}
       \end{equation}
       Then~\eqref{eq:A_st:t-ssmall} togehter with \eqref{est:I1-3} and~\eqref{est:I2-3} verifies \eqref{sew:A} of \cref{lem:sewing-2} with taking $C_1=\frac{C}{n} \|f\|_{\cC^\alpha} $, since $((1-(1-\alpha)H)\wedge(\alpha H\big))>\gamma>\frac{1}{2}$. 

       Next we verify \eqref{sew:deltaA}. Let $(s,u,t)\in\overline{[0,1]}_1^3$. Recall the definition of $s_i,i=1,\dots,6$ in~\eqref{eq:points}. Similarly to~\eqref{def:deltaA}, we have 
       \begin{align*}
           \E^{s_1} \delta A_{s,u,t} 
           &=\E^{s_1} \int_{s_4}^{s_5} \mathcal{P}^H_{r-s_3} f(\E^{s_3} B_r^H + \E^{s_1} \varphi_{k_n(r)}^n)-\mathcal{P}^H_{r-s_3} f(\E^{s_3} B_{k_n(r)}^H + \E^{s_1} \varphi^n_{k_n(r)}) \\
           & \qquad -\mathcal{P}_{r-s_3}^H f(\E^{s_3} B^H_r+ \E^{s_3} \varphi_{k_n(r)}^n) + \mathcal{P}^H_{r-s_3} f (\E^{s_3} B^H_{k_n(r)} + \E^{s_3} \varphi^n_{k_n(r)}) \mathrm{d}r \\
           &\quad +\E^{s_1} \int_{s_5}^{s_6} \mathcal{P}^H_{r-s_2} f(\E^{s_2} B^H_r + \E^{s_1} \varphi^n_{k_n(r)}) - \mathcal{P}^H_{r-s_2} f(\E^{s_2} B^H_{k_n(r)} + \E^{s_1} \varphi^n_{k_n(r)} )\\
           &\qquad - \mathcal{P}^H_{r-s_2} f(\E^{s_2} B^H_r + \E^{s_2} \varphi_{k_n(r)}^n)+\mathcal{P}^H_{r-s_2} f(\E^{s_2} B^H_{k_n(r)} + \E^{s_2} \varphi^n_{k_n(r)} ) \mathrm{d} r \\
           & =: V_1+ V_2.
       \end{align*}
       Again we observe that the above two terms can be treated in the  exactly same way, so we only detail $V_1$.
       
       Applying~\eqref{eq:heat2} with taking 
       \begin{align*}
           &x_1=\E^{s_3} B_{k_n(r)}^H + \E^{s_1} \varphi^n_{k_n(r)}, x_2=\E^{s_3} B_r^H + \E^{s_1} \varphi_{k_n(r)}^n,
          \\& x_3=\E^{s_3} B^H_{k_n(r)} + \E^{s_3} \varphi^n_{k_n(r)},
           x_4= \E^{s_3} B^H_r+ \E^{s_3} \varphi_{k_n(r)}^n,
       \end{align*}
       we get
       \begin{align}
         \label{Cor4.4-V1}
        &   \| V_1 \|_{L^p_\omega} \nonumber
           \\&\le C \| f \|_{\cC^\alpha}\int_{s_4}^{s_5}  (r-s_3)^{-H(2-\alpha)} \| \E^{s_1} [| \E^{s_3}(B_r^H-B_{k_n(r)}^H)| \cdot |\E^{s_1} \varphi^n_{k_n(r)}-\E^{s_3} \varphi^n_{k_n(r)}|] \|_{L^p_\omega}\mathrm{d} r .
          \end{align}
    By Cauchy-Schwarz inequality, 
       \begin{align*}
           &\E^{s_1} [| \E^{s_3}(B_r^H-B^H_{k_n(r)}) | \cdot |\E^{s_1} \varphi^n_{k_n(r)}-\E^{s_3} \varphi^n_{k_n(r)}|] \\
           & \le (\E^{s_1} (\E^{s_3} (B_r^H-B^H_{k_n(r)}))^2)^{\frac{1}{2}} (\E^{s_1} ( \E^{s_1} \varphi_{k_n(r)}^n -\E^{s_3} \varphi^n_{k_n(r)})^2 )^{\frac{1}{2}}.
       \end{align*}
    By Jensen inequality, we obtain
       \begin{align*}
           (\E^{s_3} (B_r^H-B^H_{k_n(r)}))^2 \le &\E^{s_3} (B_r^H-B_{k_n(r)}^H)^2,\\
           ( \E^{s_1} \varphi^n_{k_n(r)} -\E^{s_3} \varphi^n_{k_n(r)} )^2 
           =& (\E^{s_3} ( \varphi^n_{k_n(r)} -\E^{s_1} \varphi^n_{k_n(r)}))^2  \le \E^{s_3} ( \varphi^n_{k_n(r)} - \E^{s_1} \varphi^n_{k_n(r)} )^2.
       \end{align*}
    Therefore, by ~\eqref{eq:phi-1}, we have
       \begin{align*}
           &\E^{s_1} [| \E^{s_3}(B_r^H-B^H_{k_n(r)}) | \cdot |\E^{s_1} \varphi^n_{k_n(r)}-\E^{s_3} \varphi^n_{k_n(r)}|] \\
           & \le ( \E^{s_1} (B^H_r-B^H_{k_n(r)})^2)^{\frac{1}{2}} \cdot (\E^{s_1} (\varphi^n_{k_n(r)} -\E^{s_1} \varphi^n_{k_n(r)})^2)^{\frac{1}{2}} \\
           & \le C (\E^{s_1} (B^H_r-B^H_{k_n(r)})^2)^{\frac{1}{2}} \cdot (k_n(r)-s_1)^{1+\alpha H}.
       \end{align*}
    For $p \ge 2$, ~\eqref{eq:CJI} and~\eqref{eq:rep} imply
       \begin{equation} \label{Cor4.4-p>2}
           \begin{aligned}
                & \| \E^{s_1} [| \E^{s_3} (B_r^H-B^H_{k_n(r)})| | \E^{s_1} \varphi^n_{k_n(r)}-\E^{s_3} \varphi^n_{k_n(r)}|] \|_{L^p_\omega} \\
                & \le C (r-s_1)^{1+\alpha H} \left\| \| B_r^H-B^H_{k_n(r)} \|_{L_\omega^2 | \mathcal{F}_{s_1}} \right\|_{L^p_\omega} \\
                & = C (r-s_1)^{1+\alpha H} \| \E^{s_1} | B_r^H- B_{k_n(r)}^H|^2 \|^{\frac{1}{2}}_{L_\omega^{\frac{p}{2}}}\\
                & \le C (r-s_1)^{1+\alpha H} \| B_r^H -B^H_{k_n(r)} \|_{L^p_\omega} \\
                & \le \frac{C}{n} (r-s_1)^{1+\alpha H} .
           \end{aligned}
       \end{equation}
    Therefore, plugging ~\eqref{Cor4.4-p>2} into ~\eqref{Cor4.4-V1}, we have
       \begin{align*}
           \|V_1 \|_{L^p_\omega} 
           & \le  \frac{C}{n} \|b\|_{\cC^\alpha} \int_{s_4}^{s_5} (r-s_3)^{-(2-\alpha)H} (r-s_1)^{1+\alpha H} \mathrm{d} r \le  \frac{C}{n} \|f\|_{\cC^\alpha} |t-s|^{2+(2\alpha-2)H}.
       \end{align*}
      The same bound holds on $V_2$.

   Since ~\cref{ass:main1} implies $2+(2 \alpha -2)H>1$,
   we can obtain that ~\eqref{sew:deltaA} holds with $C_2= \frac{C}{n} \|b\|_{\cC^\alpha} $.

  Similarly to~\cref{lem:E1}, we could verify the process $\mathcal{A}$ in~\eqref{eq:sew:A_bound} is given by 
         \begin{equation*}
             \mathcal{A}_t=\int_0^t f(B^H_r+ \varphi^n_{k_n(r)})-f(B^H_{k_n(r)}+\varphi_{k_n(r)}^n) \mathrm{d} r.
        \end{equation*}

    In the end all of the conditions from  \cref{lem:sewing-2} are verified, which  proves the desired result.
\end{proof}

With \cref{lem:E1}, \cref{lem:E2} and \cref{lem:E3} at hand we are ready to give:
  \begin{proof}[Proof of \cref{thm:main}]
   
    By \eqref{est:E1}, \eqref{est:E2} and \eqref{est:E3}, we see that
       \begin{align*}
         \| ( X-X^n)_t-(X-X^n)_s \|_{L^p_\omega}    &\le C ( \| \varphi-\varphi^n \|_{C^{\gamma}_p[s,t]} + n^{-1}) |t-s|^{\gamma+\varepsilon}\\& = C ( \| X-X^n \|_{C^{\gamma}_p[s,t]} + n^{-1}) |t-s|^{\gamma+\varepsilon},
       \end{align*}
    which implies
       \begin{align*}
           [X-X^n]_{C_p^{\gamma}[S,T]} \le C (\| X-X^n \|_{C_p^{\gamma}[S,T]} + n^{-1} ) (T-S)^\varepsilon.
       \end{align*}
    Therefore, we have
      \begin{align*}
          \| X-X^n \|_{C_p^{\gamma}[S,T]} & \le | (X-X^n)_S| + 2[X-X^n]_{C_p^{\gamma}[S,T]} \\
          & \le  | (X-X^n)_S| +C  (\| X-X^n \|_{C_p^{\gamma}[S,T]} + n^{-1} ) (T-S)^\varepsilon.
      \end{align*}
    Fix $T-S=\Delta$ small enough and we obtain
        \begin{equation*}
            \|X-X^n \|_{C^{\gamma}_p[S,T]} \le C (|(X-X^n)_S| + n^{-1}).
      \end{equation*}
    Dividing $[0,1]$ into $[0, \Delta], [\Delta, 2 \Delta], \ldots$, yields that
      \begin{equation*}
          \|X-X^n \|_{C^{\gamma}_p[0,1]} \le C(|x_0-x_0^n|+ n^{-1}).
      \end{equation*}
\end{proof}
 \section{Optimality} \label{sec:optimal}
 In this section, we discuss the optimality of the convergence rate $n^{-1}$ obtained in the previous part. By showing that $n(X-X^n)$ converges to a non-zero limit, we verify that the rate $n^{-1}$ is optimal for the scheme \eqref{eq:SDE-EM}.

 In the following first we present a regularization lemma concerning the solution to \eqref{eq:SDE}.

  \begin{lemma} \label{lem:reg}
Let $(X_t)_{t\in[0,1]}$ be the solution to \eqref{eq:SDE}.  Suppose \cref{ass:main1} holds. Then for every $p>1$, for any  $f\in \cC^1$, we have
        \begin{equation}\label{est:reg}
            \Big\| \int_0^{\cdot} \nabla f(X_t) \mathrm{d} t  \Big\|_{C_p^{1+(\alpha-1)H}[0,1]} \leq C\|f\|_{\cC^\alpha},
        \end{equation}
   where $C$ depends only on $d,p,H,\alpha, \|b\|_{\cC^\alpha}$.     
 \end{lemma}
 \begin{proof}
     In order to show \eqref{est:reg}, we apply \cref{lem:sewing-2}. Let  $M=1$, $(s,t)\in[0,1]_1^2$ and 
        \begin{equation*}
            A_{s,t}:=\E^{s-(t-s)} \int_s^t \nabla f(B_r^H+ \E^{s-(t-s)} \varphi_r) \mathrm{d} r \text{ where } \varphi:=X-B^H.
        \end{equation*}
     We are going to verify \eqref{sew:A} and \eqref{sew:deltaA}. By~\eqref{eq:EBt}, we see
        \begin{equation*}
            A_{s,t} = \int_s^t \mathcal{P}^H_{r-[s-(t-s)]} (\nabla f)(\E^{s-(t-s)} B_r^H + \E^{s-(t-s)} \varphi_r) \mathrm{d} r.
        \end{equation*}
     Then by ~\eqref{est:heat-1}, we have 
\begin{align*}
        |A_{s,t}|  \le \int_s^t \| \mathcal{P}^H_{r-[s-(t-s)]} (\nabla f) \|_{C^0} \mathrm{d}r 
                 \le C \int_s^t  \left[ r-[s-(t-s)] \right]^{H(\alpha-1)} \| f \|_{C^{\alpha}} \mathrm{d} r.
\end{align*}
     Therefore, we have $ \|A_{s,t}\|_{L^p_\omega} \le C \|f\|_{C^\alpha} (t-s)^{1-(1-\alpha)H }.$
     Then~\eqref{sew:A} holds  with $C_1=C \|f\|_{C^\alpha}$ by the fact that $1-(1-\alpha)H > \frac{1}{2}$.
       
     Next we verify~\eqref{sew:deltaA}. Let $(s,u,t)\in\overline{[0,1]}_1^3$. Recall the definition of $s_i,i=1,\dots,6$ in~\eqref{eq:points}. We first can write
       \begin{equation*}
           \begin{aligned}
               \E^{s_1} \delta A_{s,u,t} & 
               =\E^{s_1} \bigg[ \int_{s_4}^{s_5} \E^{s_1} \nabla f(B_r^H+ \E^{s_1} \varphi_r)-\E^{s_3} \nabla f(B_r^H+ \E^{s_3} \varphi_r) \mathrm{d}r \bigg] \\
               & \quad + \E^{s_1} \bigg[ \int_{s_5}^{s_6} \E^{s_1} \nabla f(B_r^H+ \E^{s_1} \varphi_r) -\E^{s_2} \nabla f(B_r^H+ \E^{s_2} \varphi_r ) \mathrm{d} r\bigg] \\
               & = \int_{s_4}^{s_5} \E^{s_1} \bigg[ \mathcal{P}^H_{r-s_3} \nabla f(\E^{s_3} B_r^H+ \E^{s_1} \varphi_r) - \mathcal{P}^H_{r-s_3} \nabla f (\E^{s_3} B_r^H + \E^{s_3} \varphi_r)\bigg] \mathrm{d} r \\
               & \quad + \int_{s_5}^{s_6} \E^{s_1} \bigg[ \mathcal{P}^H_{r-s_2} \nabla f (\E^{s_2} B_r^H+ \E^{s_1} \varphi_r) - \mathcal{P}^H_{r-s_2} \nabla f (\E^{s_2} B_r^H+ \E^{s_2} \varphi_r) \bigg] \mathrm{d}r.
           \end{aligned}
       \end{equation*}
 By~\eqref{est:heat-1} and~\eqref{eq:phi}, we have
       \begin{equation*}
           \begin{aligned}
               \|\E^{s_1} \delta A_{s,u,t}\|_{L^p_\omega} 
               & \le C \int_s^t (r-s_3)^{H(\alpha-2)} \| f \|_{\cC^{\alpha}}\|\E^{s_1} [|\E^{s_1} \varphi_r-\E^{s_3} \varphi_r|]\|_{L^p_\omega} \mathrm{d}r \\
               & \quad + \int_u^t (r-s_2)^{H(\alpha-2)} \| f \|_{\cC^{\alpha}} \|\E^{s_1} [|\E^{s_1} \varphi_r-\E^{s_2} \varphi_r|]\|_{L^p_\omega} \mathrm{d}r\\
               &\le C \|f\|_{\cC^\alpha} |t-s|^{2+2\alpha H-2H}.
           \end{aligned}
       \end{equation*}
      Noticing  ~\cref{ass:main1} implies $2+2\alpha H-2H>1$, we conclude ~\eqref{sew:deltaA} holds with $C_2= C \|f\|_{\cC^\alpha} $.
      Moreover, since $f\in C^1$, it is obviously to see that the process $\mathcal{A}$ in~\eqref{eq:sew:A_bound} actually is given by 
        \begin{equation*}
            {\mathcal{A}}_t= \int_0^t \nabla f(X_r) \mathrm{d} r.
        \end{equation*}
    Therefore, by ~\cref{lem:sewing-2}, we have 
        \begin{equation*}
            \big\| \int_s^t \nabla f(X_r) \mathrm{d} r \big\|_{L^p} \le C\|f\|_{\cC^\alpha}|t-s|^{H(\alpha-1)+1},
        \end{equation*}
    which implies that \eqref{est:reg} holds.
 \end{proof}
 Due to \eqref{est:reg}, we can already state a result which is analogous to \cite[Proposition 2.8]{DGL-CLT}. 
 \begin{proposition} \label{prop:reg}
    Let $(X_t)_{t\in[0,1]}$ be the solutions to \eqref{eq:SDE}. Suppose \cref{ass:main1} holds. For every $f\in \cC^\alpha(\mR^d)$ there exists a $(\cF_t)_{t\in[0,1]}$-adapted process $(\cH f)^X_{t\in[0,1]}\in C_p^{1+(\alpha-1)H-}[0,1]$ for every $p\ge 2$ such that for $g\in \cC^1$ with probability one, 
         \begin{align*}
             (\cH g)^X_t=\int_0^t\nabla g(X_s)\mathrm{d} s,\quad t\in[0,1].
         \end{align*}
 \end{proposition}
\begin{proof}
    In fact, we notice that for any $f \in \cC^{\alpha},$ there exists sequence $(f_n)_{n \in \mathbb{N}} \subset \cC^1$, such that $\lim_{n \to \infty} f_n=f$ in $\cC^{\alpha-}.$ 
    Therefore, by~\cref{lem:reg}, we have $\big( \int_0^{\cdot} \nabla f_n(X_t) \mathrm{d}t \big)_{n \in \mathbb{N}}$ is a Cauchy sequence in $C_p^{1+(\alpha-1)H-}[0,1].$ We define $\cH f=\lim_{n\rightarrow\infty}\int_0^{\cdot} \nabla f_n(X_t) \mathrm{d}t$ in $C_p^{1+(\alpha-1)H-}[0,1].$
\end{proof}
   Therefore, for $b \in \cC^{\alpha}$, we can define $\int_0^{\cdot} \nabla b(X_t) \mathrm{d} t:=\lim_{n \to \infty} \int_0^{\cdot} \nabla b_n(X_t) \mathrm{d} t$ in probability. Moreover from Kolmogorov continuity criteria we know that it has a version which has $\theta$-H\"older continuous path for $\theta\in(0, 1+(\alpha-1)H)\subset(0,1)$.

Now we are at the position of showing \cref{thm:main1}.
 \begin{proof}[Proof of \cref{thm:main1}]
     Clearly \cref{prop:reg} shows the existence of $(\cH f)^X\in C_p^{1+(\alpha-1)H-}[0,1]$ which is bounded linear for $f\in \cC^{\alpha}$, it also implies that the $c$ satisfying \eqref{opt-ODE} for given $X$ is well-defined and $\|c\|_{\cC^{1+(\alpha-1)H-}([0,1])}<\infty,\mP$-a.s. Indeed  from  Kolmogorov continuity criteria and the property of Young integral we know that for any possible solution $c$, for any $k_1\in(\frac{1}{2},{1+(\alpha-1)H}),k_2\in(\frac{1}{2},(1+(\alpha-1)H)\wedge\alpha)=(\frac{1}{2},{1+(\alpha-1)H})$ so that $k_1+k_2>1$, for any $0\leq s\leq t\leq 1$ and $q>p$, we have
     \begin{align}\label{reg:c}
         \|c(t)-c(s)\|_{L^p_\omega}\le &\| \int_s^tc(r)\mathrm{d}(\cH b)_r^{X}\|_{L^p_\omega}+\frac{1}{2}\|b(X_t)-b(X_s)\|_{L^p_\omega} \nonumber\\\le C&\big\|\|c\|_{\cC^{k_1}([0,1])}\|(\cH b)^{ X}\|_{\cC^{k_2}([0,1])}\big\|_{L^p_\omega}|t-s|^{k_2}+C\|b\|_{\cC^\alpha(\mR^d)}|t-s|^{\alpha}
 \nonumber\\\le C&\|c\|_{L_\omega^q(\cC^{k_1}([0,1]))}\|(\cH b)^{ X}\|_{L_\omega^{\frac{qp}{q-p}}(\cC^{k_2}([0,1]))}|t-s|^{k_2}+C\|b\|_{\cC^\alpha(\mR^d)}|t-s|^{k_2}
  \nonumber \\\le C&\|b\|_{\cC^\alpha(\mR^d)}|t-s|^{k_2}.
     \end{align}
Then, by   Kolmogorov continuity criteria again we get $\mP$-a.s. $\|c\|_{\cC^{1+(\alpha-1)H-}}<\infty$.  Notice that \eqref{opt-ODE} is a linear ODE in the sense of Young integral for given $X$, standard fixed point arguments gives us the existence and uniqueness of $c\in \cC^{1+(\alpha-1)H-}([0,1]) $ satisfying \eqref{opt-ODE}.       
     
     Therefore we only need to show that \eqref{eq:optimal} holds for such $c$  and   for $b\in\cC^\alpha$, $\alpha<1$. 

     Denote $c^n:=n(X-X^n)$. 
 \cref{thm:main} implies that $  \sup_n \|c_n\|_{\cC_p^{1+(\alpha-1)H-}[0,1]}<\infty,$ that is to say, for $\gamma<1+(\alpha-1)H$, from Kolmogorov continuity criteria, $\sup_n \|c_n\|_{L^p(\Omega,\cC^{\gamma})}<\infty$, hence for $\cL^n:=(X,X^n,c^n, c, c^n-c, B^H)$, $ \sup_n \|\cL^n\|_{L^p(\Omega,(\cC^{\gamma})^6)}<\infty.$

By Sobolve embedding we know that there exists large $q$ and $\gamma_1<\gamma_2<\gamma$ so that compactly \begin{align}
    \label{emd:com}\cC^{\gamma}([0,1],\mR^d)\subset\subset W_q^{\gamma_2}([0,1],\mR^d)\subset \subset\cC^{\gamma_1}([0,1],\mR^d),
\end{align}
hence the laws of $(\cL^n)_{n\in\mN}$ are tight on $W_q^{\gamma_2}([0,1],\mR^d)^6$ which is Polish 
 and 
\begin{align}
    \label{uniform}
    \sup_n\||\cL^n|\|_{L^p(\Omega, W_q^{\gamma_2}([0,1],\mR^d)^6)}<\infty.
\end{align}Following from  Prokorov’s theorem, we have that the laws of a further subsequence
 $(\cL^n)_{n\in\mN_1}, \mN_1\subset\mN$, converges weakly on $W_q^{\gamma_2}([0,1],\mR^d)^6$. By Skorohod’s representation theorem, there exists a probability space $(\hat\Omega,\hat{\cF}, \hat\mP)$ and random variables 
 $\hat \cL^n:=(\tilde X^n,\hat X^{n,n}, \hat c^n, \tilde c^n,\hat c^n-\tilde c^n,\hat B^{H,n}):\hat\Omega\mapsto W_q^{\gamma_2}([0,1],\mR^d)^6$ and  $\cL:=(\tilde X,\hat X, \hat c, \tilde c, \hat c-\tilde c,  \hat B^H):\hat\Omega\mapsto W_q^{\gamma_2}([0,1],\mR^d)^6$  so that 
 \begin{align}\label{eq:samelaw}
     \cL^n \text{ on } (\Omega,\cF,\mP)\overset{d}{=}\hat\cL^n  \text{ on }(\hat\Omega,\hat{\cF}, \hat\mP),
 \end{align}
and 
\begin{align}
    \label{con:p-a.s.}\hat\mP-a.s.,\quad (\hat\cL^n)_{n\in\mN_1} \quad \text{converges to } \quad \cL \quad\text{ in }   W_q^{\gamma_2}([0,1],\mR^d)^6.
\end{align}
Let $(\hat\cF_t)_{t\geq0}$ be the augmentation of the filtration generated by $(\hat X, \hat X^n, \hat B^{H,n})$. \eqref{eq:samelaw}, the continuity of $b$ and the strong well-posedness of \eqref{eq:SDE} imply 
\begin{align*}
    \tilde X_t&=x_0+\int_0^tb(\tilde X_s)\mathrm{d}s+\hat B_t^H, \quad  \tilde X_t^k=x_0+\int_0^tb(\tilde X_s^k)\mathrm{d}s+\hat B_t^{H,k},\\
    \hat X_t^{k,n}&=x_0+\int_0^tb(\hat X_{k_n(s)}^{k,n})\mathrm{d}s+\hat B_t^{H,k},\quad
    \hat c^n=n(\tilde X^n- \hat X^{n,n}),
    \\
    \tilde c^n(t)&= \int_0^t\tilde c^n(s)\mathrm{d} (\cH b)^{\tilde X_s^n}+ \frac 12 (b(\tilde  X_t^n)-b(x_0)), \quad t\in[0,1].
\end{align*}
Following from \eqref{eq:samelaw} which implies the uniform integrability, \eqref{con:p-a.s.} and Vitali's theorem, for all $p\geq1$ we have 
\begin{align}\label{con:L-Ln}
    \lim_{n\rightarrow\infty}\big\|\hat\cL^n-\cL\big\|_{L^p(\hat\Omega, W_q^{\gamma_2}([0,1],\mR^d)^6)}=0.
\end{align}
At this point we can also claim that $\tilde{X}=\hat X$ $\hat\mP$-a.s. from \cref{thm:main} and the strong well-posedness of \eqref{eq:SDE}. 

 Notice that for any $b \in \cC^{\alpha}$,  there exists a sequence $\{b_n\}_{n \in \mathbb{N}} \subset \cC^1$ such that $\lim_{n \to \infty} b_n=b$ in $\cC^{\alpha-}$, 
 then for $m\in\mN$
 \begin{align}
    \hat c^n(t)=&
    n(\tilde X_t^n-\hat X_t^{n,n})=
    n \left( \int_0^t b( \tilde X_s^n) \mathrm{d}s-\int_0^t b( \hat X_{k_n(s)}^{n,n}) \mathrm{d}s \right)\nonumber\\=&
    n \int_0^t (b-b_m)( \tilde X_s^n) - (b-b_m)( \hat X_{k_n(s)}^{n,n})\mathrm{d}s +n\int_0^tb_m( \tilde X_r^n)-b_m( \hat X_r^{n,n})\mathrm{d}s\nonumber\\&+n\int_0^tb_m( \hat X_r^{n,n})-b_m( \hat X_{k_n(s)}^{n,n})\mathrm{d}s=:
    J_t^{m,n}+K_t^{m,n}+L_t^{m,n}.\label{def:Vn}
 \end{align}
Following from \cref{lem:E1}, \cref{lem:E2} and \cref{lem:E3} we get 
\begin{align}\label{est:J}
    \big\|\sup_{t\in[0,1]}|J_t^{m,n}|\big\|_{L_\omega^p}\lesssim\|b-b_m\|_{\cC^{\alpha-}};
\end{align}
   for $K^{m,n}$  
   \begin{align*}
 K_t^{m,n}=\int_0^t\int_0^1\nabla b_m(\theta  \tilde X_r^{n}+(1-\theta) \hat X_r^{n,n})\cdot  \hat c_r^n\mathrm{d}\theta\mathrm{d}r
,
   \end{align*} 
   since $ \hat\mP$-a.s. $\hat c^n$ converges to $\hat c$, $\hat X^{n,n}$ converges to $\hat X$, and $\tilde X^n$ converges to $\tilde X=\hat X$,  $ \hat\mP$-a.s. we have    
   \begin{align}\label{est:K}
       \lim_{n\rightarrow\infty} K_t^{m,n}=
       \int_0^t\nabla b_m(\hat X_r)\cdot  \hat c_r\mathrm{d}r= \int_0^t  \hat c_r\mathrm{d}(\cH b_m)_r^{\hat X},\quad \hat\mP-a.s.
   \end{align}
 Concerning $L^{m,n}$, following from  \eqref{eq:SDE-EM} and the fact a.s. $B^H\in \cC^{H-\eps}([0,1])$ for sufficiently small $\eps>0$ by Kolmogorov continuity criteria, which means there exists a small enough $\varepsilon'>0$ so that a.s. $B_H\in\cC^{1+\varepsilon'}$,  we have a.s. (denote {$\{c\}:=c-\lfloor c\rfloor$ for $c\in\mR_+$})
         \begin{equation*}
            \begin{aligned}
                n(\hat X_{r}^n-\hat X_{k_ n(r)}^{n,n})&=n(r-k_ n(r))b(\hat X_{k_ n(r)}^{n,n})+n(\hat B_r^{H,n}-\hat B_{k_ n(r)}^{H,n})\\
                &=\{nr\}b(\hat X_{k_ n(r)}^{n,n})+\{nr\}(\hat B_r^{H,n})'+o(n^{-\eps'});
            \end{aligned}
        \end{equation*}  
        together with the condition that  $\nabla b_m$ is continuous and bounded, moreover $\hat X_{r}^{n,n}\rightarrow \hat X_{r}$ and $\hat X_{k_n(r)}^{n,n}\rightarrow \hat X_{r}$ a.s. from \cref{thm:main},
dominated convergence theorem shows as $n\rightarrow\infty$, we have  
   $ \hat\mP$-a.s. 
   \begin{align}\label{est:L}
    &\lim_{n\rightarrow \infty}L_t^{m,n} \nonumber\\&=    \lim_{n\rightarrow \infty}n\int_0^t\int_0^1\nabla b_m(\theta  \hat X_r^{n,n}+(1-\theta) \hat X_{k_n(r)}^{n,n})\cdot ( \hat X_r^{n,n}- \hat X_{k_n(r)}^{n,n})\mathrm{d}\theta\mathrm{d}r
    \nonumber\\&=\lim_{n\rightarrow \infty}\int_0^t\int_0^1\nabla b_m(\theta  \hat X_r^{n,n}+(1-\theta) \hat X_{k_n(r)}^{n,n})\Big(\{nr\}\big(b(\hat X_{k_ n(r)}^{n,n})+(B_r^H)'\big)+o(n^{-\eps'})\Big)\mathrm{d}r \nonumber\\&=\frac{1}{2}\int_0^t\nabla b_m(\hat X_r)\big(b( \hat X_r)+(\hat B_r^H)'\big)\mathrm{d}r=
    \frac{1}{2}\big(b_m(\hat X_t)-b_m(x_0)\big),
   \end{align}
  we have applied the fact that  $\{n\cdot\}$ converges to $\frac 12$ weakly in $L^2([0,t])$ as $n\rightarrow\infty$ in deriving the  penultimate equality and the lase equality holds due to elementary chain rule.

Now we put \eqref{est:J},  \eqref{est:K} and \eqref{est:L} together with taking $m,n\rightarrow\infty$ for both side of \eqref{def:Vn}  then get $ \hat\mP$-a.s. 
\begin{align*}
\hat c(t) 
=&\int_0^t  \hat c_r\mathrm{d}(\cH b)_r^{\hat X} +\frac{1}{2}(b(\hat X_t)-b(x_0)).
\end{align*}
That is, $(\hat X, \hat c)$ satisfy \eqref{eq:SDE} and \eqref{opt-ODE}.

Meanwhile following from the fact that $\hat \mP$ a.s.,  $\tilde X^n$ converges to $\hat X$, $\tilde c^n$ converges to $\tilde c$  and the property of Young's integral \eqref{Young-est} 
we can conclude $\tilde c$ also satisfies \eqref{opt-ODE} for such $\hat X$. It implies that $\hat c =\tilde c$ a.s. and  $\hat\mP$-a.s., $\hat c^n-\tilde c^n$ converges to $0$; hence we get that for given $X$, $c^n$ converges to $c$ in probability on $(\Omega,\cF,\mP)$. 

\end{proof}
\appendix
\section{Auxiliary Proofs} \label{app}
\begin{proof}
    [Proof of \eqref{est:E2-app-1} and  \eqref{est:E2-app-2}]\label{app:E2}
      Similarly to~\eqref{eq:A_st_heat} and~\eqref{eq:AstLp}, we have
        \begin{align}
           \| A_{s,t} \|_{L^p_\omega}&= \| \int_s^t \mathcal{P}^H_{r-[s-(t-s)]} f (\E^{s-(t-s)} B^H_r + \E^{s-(t-s)} \varphi_{r}^n) \nonumber\\
        & \qquad -\mathcal{P}^H_{r-[s-(t-s)]} f(\E^{s-(t-s)} B^H_r+\E^{s-(t-s)}\varphi^n_{k_n(r)}) \mathrm{d} r\|_{L^p_\omega}
          \nonumber \\& \le C \|f\|_{\cC^\alpha} \sup_{r \in [s,t]} \| \varphi_r^n-\varphi_{k_n(r)}^n \|_{L_\omega^p} |t-s|^{1-(1-\alpha)H} \nonumber\\
           & \le C n^{-1} \|f\|_{\cC^\alpha} |t-s|^{1-(1-\alpha)H} ,\label{est:A-2}
       \end{align}
  where in the second inequality we used $\| \varphi_r^n-\varphi_{k_n(r)}^n \|_{L_\omega^p}\le \|b\|_{\cC^0} n^{-1}$. Hence \eqref{est:E2-app-1} holds. 
  
    Next we verify \eqref{est:E2-app-2}. Let $(s,u,t)\in\overline{[0,1]}_1^3$. Recall the definition of $s_i,i=1,\dots,6$ in~\eqref{eq:points}. Similarly to~\eqref{def:deltaA}, we can write 
       \begin{align*}
           \E^{s_1} \delta A_{s,u,t} =&\E^{s_1} \int_{s_4}^{s_5} \mathcal{P}_{r-s_3}^H f(\E^{s_3} B_r^H + \E^{s_1} \varphi_r^n)- \mathcal{P}_{r-s_3}^H f(\E^{s_3} B_r^H + \E^{s_1} \varphi_{k_n(r)}^n)  \\
           & \qquad\qquad- \mathcal{P}^H_{r-s_3} f(\E^{s_3} B_r^H + \E^{s_3} \varphi_r^n)+\mathcal{P}_{r-s_3}^H f(\E^{s_3} B_r^H+ \E^{s_3} \varphi_{k_n(r)}^n)\mathrm{d} r \\
           & +\E^{s_1} \int_{s_5}^{s_6} \mathcal{P}_{r-s_2}^H f(\E^{s_2} B_r^H + \E^{s_1} \varphi_r^n)- \mathcal{P}_{r-s_2}^H f(\E^{s_2} B_r^H+ \E^{s_1} \varphi_{k_n(r)}^n)  \\
           & \qquad\qquad-\mathcal{P}_{r-s_2}^H f(\E^{s_2} B_r^H + \E^{s_2} \varphi_r^n)+ \mathcal{P}_{r-s_3}^H f(\E^{s_2} B_r^H+\E^{s_2} \varphi_{k_n(r)}^n)\mathrm{d} r \\
            =&:I_1+I_2.
       \end{align*}
     The two terms are treated in  the exactly same manner, so we only detail $I_1$. Similarly to~\eqref{eq:I_1_bound}, 
    we get
       \begin{align}
           \| I_1 \|_{L_\omega^p} \le &C \| f\|_{\cC^\alpha}\int_{s_4}^{s_5} (r-s_3)^{-(1-\alpha)H}\big\| \E^{s_1} | \E^{s_3} \varphi_r^n -\E^{s_3} \varphi_{k_n(r)}^n -\E^{s_1} \varphi_r^n + \E^{s_1} \varphi_{k_n(r)}^n | \big\|_{L_\omega^p} \nonumber\\
           & \qquad+ (r-s_3)^{-(2-\alpha)H} \big\| | \E^{s_1} \varphi_r^n -\E^{s_1} \varphi_{k_n(r)}^n | \cdot \E^{s_1} | \E^{s_3} \varphi_{k_n(r)}^n-\E^{s_1} \varphi_{k_n(r)}^n | \big\|_{L_\omega^p} \mathrm{d} r.\label{eq:I_1_bound_2}
       \end{align}
    Similar to~\eqref{eq:E1E3=E1}, we have
      \begin{equation}\label{eq:E1E3=E1_2}
            \| \E^{s_1}|\E^{s_3} \varphi_r^n-\E^{s_3} \varphi_{k_n(r)}^n-\E^{s_1} \varphi_r^n+ \E^{s_1} \varphi_{k_n(r)}^n | \|_{L^p_\omega} \le \| \E^{s_1} | (\varphi_r^n-\varphi_{r}^n)-\E^{s_1} (\varphi_r^n-\varphi_{r)}^n) | \|_{L^p_\omega}.
      \end{equation}
    We note that
      \begin{align*}
          \varphi_r^n-\varphi_{k_n(r)}^n &=\int_{k_n(r)}^r b(B^H_{k_n(t)}+\varphi_{k_n(t)}^n) \mathrm{d}t =(r-k_n(r)) b(B_{k_n(r)}^H+\varphi^n_{k_n(r)}) \in \mathcal{F}_{k_n(r)}.
      \end{align*}
    When $s_1 \in [k_n(r),r],$ we have $$\varphi_r^n-\varphi_{k_n(r)}^n-\E^{s_1}(\varphi_r^n-\varphi_{k_n(r)}^n)=0;$$
    when $s_1 < k_n(r),$ by taking $X=b(B_{k_n(r)}^H+\varphi^n_{k_n(r)})$ and $Y=b(\E^{s_1} B_{k_n(r)}^H+\varphi^n_{s_1}) \in \mathcal{F}_{s_1}$ in ~\eqref{eq:condition}, we obtain
      \begin{align*}
         &\E^{s_1} | (\varphi_r^n-\varphi_{k_n(r)}^n)-\E^{s_1} (\varphi_r^n-\varphi_{k_n(r)}^n)| \\
          =&(r-k_n(r)) \E^{s_1} |b(B_{k_n(r)}^H + \varphi_{k_n(r)}^n)-\E^{s_1} b(B_{k_n(r)}^H + \varphi_{k_n(r)}^n)| \\
         \le& 2 (r-k_n(r)) \E^{s_1} | b(B^H_{k_n(r)}+ \varphi_{k_n(r)}^n)-b(\E^{s_1} B_{k_n(r)}^H + \varphi_{s_1}^n)|\\
         \le& C(r-k_n(r))  \E^{s_1} (|B_{k_n(r)}^H-\E^{s_1} B_{k_n(r)}^H|^\alpha + | \varphi_{k_n(r)}^n-\varphi_{s_1}^n|^\alpha).
      \end{align*}
    Moreover, using ~\eqref{eq:rep1} and ~\eqref{eq:phi-1}, we have
      \begin{equation*}
          \begin{aligned}
              &\E^{s_1} | (\varphi_r^n-\varphi_{k_n(r)}^n)-\E^{s_1} (\varphi_r^n-\varphi_{k_n(r)}^n)| \\
              \le& C(r-k_n(r))  (|k_n(r)-s_1|^{\alpha H} + |k_n(r)-s_1|^{\alpha}) 
              \le  \frac{C}{n} \|b\|_{\cC^\alpha} |r-s_1|^\alpha
          \end{aligned}
      \end{equation*}
    where we used the fact $H>1$ in the second inequality. Plugging it into~\eqref{eq:E1E3=E1_2}, we get
        \begin{equation} \label{eq:cor4.3-I1-1}
            \| \E^{s_1}|\E^{s_3} \varphi_r^n-\E^{s_3} \varphi_{k_n(r)}^n-\E^{s_1} \varphi_r^n+ \E^{s_1} \varphi_{k_n(r)}^n | \|_{L^p_\omega} \le  \frac{C}{n} |r-s_1|^\alpha. 
        \end{equation}
        
    Meanwhile ~\eqref{eq:phi_n_reg} and \eqref{eq:phi-1} yield 
      \begin{align} 
         \E^{s_1} |\mE^{s_3} \varphi_{r}^n-\E^{s_1} \varphi_{r}^n| \le C |r-s_1|^{1+\alpha H},\label{eq:cor4.3-I1-2}\\ 
         \| \E^{s_1} (\varphi_r^n-\varphi_{k_n(r)}^n) \|_{L_\omega^p} \le C \| \varphi_\cdot^n-\varphi_{k_n(\cdot)}^n \|_{C_p^0} \le \frac{C}{n}.\label{eq:cor4.3-I1-3}
      \end{align}
    Applying ~\eqref{eq:cor4.3-I1-1},~\eqref{eq:cor4.3-I1-2} and~\eqref{eq:cor4.3-I1-3} into~\eqref{eq:I_1_bound_2} gives us
      \begin{align} \label{est:III1}
        \| I_1 \|_{L^p_\omega} &\le C \frac{ \| f\|_{\cC^\alpha}}{n} \int_{s_4}^{s_5} (r-s_3)^{-(1-\alpha)H} (r-s_1)^\alpha \mathrm{d} r \nonumber + (r-s_3)^{-(2-\alpha)H} (r-s_1)^{1+\alpha H}\mathrm{d}r \nonumber\\
        & \le C \frac{\|f\|_{\cC^\alpha}}{n} (t-s)^{(1+\alpha-(1-\alpha)H) \wedge (2+(2 \alpha -2)H)}.
      \end{align}

     The same bound holds on $I_2$.  Hence we get \eqref{est:E2-app-2}.

\end{proof}

\section*{Acknowledgment}
CL is supported by Deutsche Forschungsgemeinschaft (DFG) - Projektnummer 563883019.
KS is grateful to the financial supports by National Key R \& D Program of China (No. 2022YFA1006300) and the financial supports of the NSFC (No. 12426205, No. 12271030). 

We are  grateful to Prof. Rongchan Zhu (Beijing Institute of Technology) for her  fruitful advices. We also are grateful for valuable suggestion on the optimality result from an anonymous reviewer.
\bibliographystyle{alpha-mod}
\bibliography{references}

    \end{document}